\newtheorem{assumption}{Assumption}
\newtheorem{remark}{Remark}
\newtheorem{theorem}{Theorem}
\newtheorem{lemma}{Lemma}
\title{Regularity analysis for stochastic partial
differential equations with nonlinear
multiplicative trace class noise}
\author{Arnulf Jentzen$^1$ 
and 
Michael R\"{o}ckner$^2$
\bigskip
\\
\small{$^1$Program in Applied 
and Computational Mathematics, 
Princeton University,}
\\
\small{Fine Hall,
Washington Road, 
Princeton, New Jersey 08544, 
USA, 
e-mail:
jentzen@math.princeton.edu}
\smallskip
\\
\small{$^2$Faculty 
of Mathematics, 
Bielefeld University,
33501 Bielefeld, Germany,} 
\\
\small{e-mail: 
roeckner@math.uni-bielefeld.de}
}
\author{Arnulf Jentzen\thanks{Program 
in Applied
and Computational Mathematics,
Princeton University, 
Fine Hall, Washington Road,
Princeton,
NJ 08544-1000,
USA 
(ajentzen@math.princeton.edu)}
\  and
Michael R{\"o}ckner\thanks{Faculty of Mathematics,
Bielefeld University, 
Universitaetsstrasse~25,
33615~Bielefeld,
Germany 
(roeckner@math.uni-bielefeld.de) 
and 
Department of Mathematics 
and Statistics, Purdue 
University, 
150 N.~University St,
West Lafayette, 
IN 47907-2067,
USA (roeckner@math.purdue.edu)}}
\begin{document}

\maketitle

\begin{abstract}
In this article spatial and temporal regularity
of the solution process of a stochastic partial differential 
equation (SPDE) of evolutionary type with nonlinear
multiplicative trace class noise is analyzed.\\ \ \\
{\bf Key words:} stochastic partial differential equations, regularity analysis, nonlinear multiplicative noise\\ \ \\
{\bf AMS subject classification:} 60H15,
35R60
\end{abstract}
\section{Introduction}\label{intro}
Spatial and temporal regularity of the solution process
of a stochastic partial differential equation (SPDE) of
evolutionary type are investigated in this article. More
precisely, it is analyzed under which conditions on the
noise term of a semilinear SPDE the solution process
enjoys values in the domains of fractional powers of the
dominating linear operator of the SPDE. It turns out 
that the essential constituents determining the regularity
of the solution process are assumptions on the covariance
operator of the driving noise process of the SPDE and
appropriate boundary conditions on the diffusion coefficient.
While the regularity of 
(affine) linear SPDEs 
has been intensively 
studied in previous results 
(see, e.g., N.~V.~Krylov \& 
B.\ L.\ Rozovskii \cite{kr79},
B.\ L.\ Rozovskii \cite{r90},
G.\ Da Prato \& J. Zabczyk \cite{dz92},
N.\ V.\ Krylov \cite{k94},
Z.\ Brze{\'z}niak~\cite{b97b},
Z.\ Brze{\'z}niak \& J. van 
Neerven~\cite{bv03},
S.\ Tindel et al.~\cite{ttv04}
and 
Z.\ Brze{\'z}niak et al.~\cite{bvvw08}), 
the main purpose of 
this article is to handle possibly nonlinear diffusion coefficients
in SPDEs driven by trace class 
Brownian noise 
(see also X.\ Zhang~\cite{z07}
for a related result).

In order to illustrate the results in this article, we 
concentrate on the following example SPDE in this introductory
section and refer to 
Section~\ref{sec:setting} for our 
general setting and to 
Section~\ref{sec:examples} for 
further examples of SPDEs. 
Let $ T \in (0,\infty) $, 
let $ ( \Omega, 
\mathcal{F}, \mathbb{P} ) $ be a probability space with
a normal filtration $ \left( \mathcal{F}_t \right)_{ 
t \in [0,T] } $ and let $ H = L^2( (0,1), \mathbb{R}) $
be the $ \mathbb{R} $-Hilbert space of equivalence classes
of square integrable functions from $ (0,1) $ to $ \mathbb{R} $.
Moreover, let 
$ f, b \colon (0,1) \times 
\mathbb{R} \rightarrow 
\mathbb{R} 
$ 
be two continuously 
differentiable functions
with globally bounded derivatives, 
let 
$ 
  x_0 \colon (0,1) 
  \rightarrow \mathbb{R} 
$ 
be a smooth function with $ \lim_{x \searrow 0} x_0(x) = \lim_{x \nearrow 1} 
x_0(x) = 0 $ and let 
$ 
W \colon [0,T] \times 
\Omega \rightarrow
H 
$ be a standard 
$ Q $-Wiener process 
with respect to 
$ \left( \mathcal{F}_t \right)_{ t \in [0,T] } $ with a
covariance operator $ Q \colon H \rightarrow H $. It is a classical
result (see, e.g., Theorem VI.3.2 in \cite{w05a}) that the 
covariance operator $ Q \colon H \rightarrow H $ of the Wiener
process $ W \colon [0,T] \times \Omega \rightarrow H $ has an
orthonormal basis $ g_j \in H $, $ j \in \{0,1,2,\ldots\} $,
of eigenfunctions with summable eigenvalues $ \mu_j \in [0,
 \infty) $, $ j \in \{0,1,2,\ldots\} $. In order to have a
more concrete example, we consider the choice $ g_0(x) = 1 $,
$ g_j( x ) = \sqrt{2} \cos( j \pi x ) $, $ \mu_0 = 0 $ and
$ \mu_j = j^{-r} $ for all $ x \in (0,1) $ and all $ j \in
\mathbb{N} $ with a given real number $ r \in (1,\infty) $
in the following and refer to 
Section~\ref{sec:examples} for
possible further examples. 
Then we consider the SPDE
\begin{equation}\label{eq:SPDE1}
  d X_t(x)
  =
  \left[
    \frac{ \partial^2 }{ \partial x^2 } \, X_t(x)
    + f( x, X_t(x) )
  \right] dt
  +
  b( x, X_t(x) ) \, dW_t(x)
\end{equation}
with $ X_t(0) = X_t(1) = 0 $ and $ X_0(x) = 
x_0(x) $ for $ t \in [0,T] $ and $ x \in (0,1) $.
Under the assumptions above the SPDE \eqref{eq:SPDE1} has a unique mild solution. Specifically, there exists an up to indistinguishability unique adapted
stochastic process 
$ 
  X
  \colon [0,T] \times 
  \Omega \rightarrow H 
$ 
with continuous sample paths which 
satisfies
\begin{equation}\label{eq:Dprocess_X}
  X_t
  =
  e^{ A t } x_0
  +
  \int_0^t
  e^{ A( t - s ) } F( X_s ) \, ds
  +
  \int_0^t
  e^{ A( t - s ) } B( X_s ) \, dW_s
\end{equation}
$ \mathbb{P} $-a.s.\ for all 
$ t \in [0,T] $ where 
$ A \colon D( A ) \subset H 
\rightarrow H $ is the 
Laplacian with Dirichlet 
boundary conditions and 
where 
$ F \colon H \rightarrow H $ 
and 
$ B \colon H \rightarrow HS(U_0,H) $ 
are given by $ ( F( v ) )( x ) = f( x, v( x ) ) $ and $ ( B( v ) u )( x ) = b( x, v( x ) ) \cdot u( x ) $ for all $ x \in (0,1) $, $ v \in H $ 
and all $ u \in U_0 $. 
Here $ U_0 = Q^{ 1/2 }( H ) 
$ with
$ \left< v, w \right>_{ U_0 } 
= \left< Q^{ - 1 / 2 } v, 
Q^{ - 1 / 2 } w \right>_H $ 
for all
$ v, w \in U_0 $
is the image $ \mathbb{R} $-Hilbert 
space of $ Q^{ \frac{1}{2} } $ 
(see Appendix C in \cite{pr07}).

We are then interested to know for which 
$ \gamma \in [0,\infty) $
in dependence on the decay rate 
$ r \in (1,\infty) $ of the
eigenfunctions of the covariance operator 
$ 
  Q \colon H \rightarrow H 
$
the solution process 
$ 
  X \colon [0,T] \times \Omega 
  \rightarrow H 
$
of \eqref{eq:SPDE1} takes values in $ D( (-A)^{ \gamma } ) $.
For the SPDE \eqref{eq:SPDE1} it turns out that
\begin{equation}\label{eq:result}
  \mathbb{P}\Big[
    X_t \in D( (-A)^{ \gamma } )
  \Big]
  = 1
\end{equation}
holds for all $ t \in [0,T] $ and all 
$ \gamma \in [0, \frac{\min(3, r+1)}{4}) $
(see Theorem~\ref{thm:existence} 
in~Section~\ref{sec:main}
for the main result of
this article 
and 
Subsection~\ref{sec:example1} 
for the SPDE \eqref{eq:SPDE1}). 
Under further assumptions 
on the diffusion
coefficient function 
$ 
  b \colon (0,1) \times \mathbb{R} 
  \rightarrow \mathbb{R} 
$, 
the solution 
of \eqref{eq:SPDE1} has 
even more regularity which 
can be seen in 
Subsection~\ref{sec:example2}.

In the following we relate the results in this article
with existing regularity results in the literature and also
illustrate how \eqref{eq:result} 
can be established.
The regularity of linear SPDEs has been intensively 
analyzed in the literature (see, e.g., \cite{kr79,r90,dz92,k94,bv03,b97b,ttv04,bvvw08}). For
instance, in Theorem 6.19 in \cite{dz92}, Da Prato and Zabczyk
already showed for the SPDE \eqref{eq:SPDE1} in the case 
$ f( x,y ) = 0 $ for all $ x \in (0,1) $, $ y \in \mathbb{R} $
and $ b : (0,1) \times \mathbb{R} \rightarrow \mathbb{R} $
sufficiently small and linear in the second variable that
\eqref{eq:result} holds for all $ t \in [0,T] $ and all
$ \gamma \in [0, \frac{ \min( 4, r+1 ) }{ 4 } ) $.
Their key idea in Theorem~6.19 
in \cite{dz92} was to apply the
Banach fixed point theorem 
in an appropriate Banach space of
$ D( (-A)^{ \gamma } ) $-valued 
stochastic processes for
$ \gamma \in [0, \frac{ \min( 4, r+1 ) }{ 4 } ) $.
This approach is based on the fact that
$ B \colon 
H \rightarrow HS( U_0, H ) $ is linear and 
globally Lipschitz continuous from $ D( (-A)^{ \gamma } ) 
\subset H $ to $ HS( U_0, D( (-A)^{ \gamma } ) ) \subset
HS( U_0, H ) $ for $ \gamma \in [0, \frac{ \min( 2, r-1 ) }{ 4 } ) $
since $ b : (0,1) \times \mathbb{R} \rightarrow \mathbb{R} $ is
assumed to be linear in its second variable. Although their
method in Theorem 6.19 in \cite{dz92} works quite well for
linear SPDEs, it can not be generalized to nonlinear SPDEs of the form
\eqref{eq:SPDE1}.
More formally, in the case of 
a nonlinear 
$ b \colon (0,1) \times 
\mathbb{R} \rightarrow \mathbb{R} 
$,
$ 
B \colon 
H \rightarrow HS( U_0, H ) 
$ 
is in general
not globally Lipschitz continuous from $ D( (-A)^{\gamma} ) $
to $ HS( U_0, D( (-A)^{\gamma} ) ) $ for $ \gamma > 0 $
although 
$ b \colon (0,1) \times 
\mathbb{R} \rightarrow \mathbb{R} $
is assumed to have globally bounded derivatives. Therefore, a
contraction argument as in 
Theorem~6.19 in \cite{dz92} 
(see also J.\ van Neerven 
et al.~\cite{vvw08}
for a related result)
in a Banach space 
of $ D( (-A)^{\gamma} ) 
$-valued stochastic
processes for $ \gamma > \frac{1}{2} 
$ 
can in general not be established
for nonlinear SPDEs of the form \eqref{eq:SPDE1}. 
This difficulty is
a key problem of regularity 
analysis for nonlinear SPDEs 
and has been 
pointed out in X. Zhang \cite{z07} 
(see page 456 in \cite{z07}).

We now demonstrate our approach to analyze the regularity of
\eqref{eq:SPDE1} which overcomes the lack of Lipschitz
continuity of 
$ B \colon H \rightarrow HS( U_0, H ) $ with respect to $ D( (-A)^{ \gamma } ) $ and $ HS( U_0, D( (-A)^{\gamma} ) ) $ for $ \gamma > 0 $ in the nonlinear
case. First of all, by exploiting the smoothing effect of the
semigroup of the Laplacian in \eqref{eq:Dprocess_X}, the
existence of an up to modifications unique predictable
$ 
  D( (-A)^{ \gamma } ) 
$-valued 
solution process 
$ 
  X \colon [0,T] \times \Omega 
  \rightarrow D( (-A)^{ \gamma } ) 
$
of \eqref{eq:SPDE1} 
with
\begin{equation}\label{eq:boot1}
  \sup_{ t \in [0,T] }
  \mathbb{E}\!\left[
    \left\|
    X_t
    \right\|_{ D( (-A)^{ \gamma } ) 
    }^2
  \right]
  < \infty
\end{equation}
can 
be established 
immediately
for all 
$ \gamma \in [0, \frac{1}{2}) $
(see J.\ van Neerven 
et al.~\cite{vvw08} for details). 
However, we want to show \eqref{eq:result}
for all $ t \in [0,T] $ and all $ \gamma \in [0,\frac{\min(3,r+1)}{4} ) $ instead of $ \gamma \in [0,\frac{1}{2}) $. 
To this end a key estimate 
in our approach is 
the linear growth bound
\begin{equation}\label{eq:X2}
  \left\|
    B( v )
  \right\|_{ HS( U_0, 
    D( (-A)^{\alpha} ) ) }
  \leq 
  c_{ \alpha } 
  \left(
    1 + \left\| v \right\|_{ D( (-A)^{\alpha} ) }
  \right) 
\end{equation}
for all 
$ v \in D( (-A)^{\alpha} ) $,
$ \alpha \in 
[0, \frac{ \min( 1, r-1 ) }{ 4 } ) $ 
with
$ c_{ \alpha } \in [0,\infty) $,
$
  \alpha \in 
  [0, \frac{ \min(1,r-1) }{ 4 })
$, 
appropriate
which 
we sketch below.
We would like to point
out here that $ B \colon H
\rightarrow HS(U_0,H) $
fulfills the linear
growth bound~\eqref{eq:X2}
although it 
fails to be globally
Lipschitz continuous
from $ D( (-A)^\alpha ) $
to $ HS( U_0, D( (-A)^\alpha ) ) $
for $ \alpha > 0 $
in general
(see Section~\ref{sec:examples}
for the verification of
\eqref{eq:X2} 
in the case of SPDEs of the 
form~\eqref{eq:SPDE1}).
Exploiting 
estimate~\eqref{eq:X2} 
in an appropriate 
bootstrap argument 
will then show \eqref{eq:result}
for all $ t \in [0,T] $ and all $ \gamma \in [ 0, \frac{ \min( 3, r+1 ) }{ 4 } ) $. 
More formally,
using that 
the semigroup 
% $ e^{ A t } \in L( H ) $,
% $ t \in [0,T] $, 
is analytic 
with
$ e^{ A t }( H ) \subset
D(A) $
for all $ t \in (0,T] $
yields
\begin{align*}
  & 
  \int_0^t
  \mathbb{E}\Big[ 
    \big\|
    (-A)^{\gamma} \,
    e^{ A( t - s ) }
    B( X_s )
    \big\|_{ HS( U_0, H ) }^2
  \Big] \, ds
\\ & \leq 
  \int_0^t
  \big\|
    (-A)^{\vartheta} \,
    e^{ A( t - s ) }
  \big\|^2_{ L( H ) }
  \,
  \mathbb{E}\Big[
    \big\|
    (-A)^{(\gamma - \vartheta)}
    B( X_s )
    \big\|_{ HS( U_0, H ) }^2 
  \Big] \, ds
\\&\leq 
  \int_0^t
  \left( t - s 
  \right)^{ -2 \vartheta }
  \,
  \mathbb{E}\Big[ 
    \big\|
      B( X_s )
    \big\|_{ 
      HS( U_0, D( 
        (-A)^{ (\gamma - \vartheta) } 
      ) ) 
    }^2 
  \Big] \,
  ds
\end{align*}
and using 
estimate~\eqref{eq:X2}
then shows
\begin{align}\label{eq:newX1}
\nonumber
  &
  \int_0^t
  \mathbb{E}\Big[ 
    \big\|
    (-A)^{\gamma} \,
    e^{ A( t - s ) }
    B( X_s )
    \big\|_{ HS( U_0, H ) }^2
  \Big] \, ds
\\&\leq
\nonumber
  \int_0^t
  \left( t - s 
  \right)^{ - 2 \vartheta }
  \left| c_{ \gamma - \vartheta } 
  \right|^2 
  \mathbb{E}\bigg[
    \left( 
      1 + \left\| X_s \right\|_{ 
        D( (-A)^{ 
          (\gamma - \vartheta) } ) }
    \right)^{ \! 2 } \,
  \bigg] ds
\\&\leq
  2 
  \left| c_{ \gamma - \vartheta } 
  \right|^2
  \left(
    \int_0^t
    s^{ -2 \vartheta } \, ds
  \right)
  \left( 
    1 + 
    \sup_{ s \in [0,T] }
    \mathbb{E}\Big[ \left\| 
      X_s 
    \right\|_{ 
      D( 
        (-A)^{ (\gamma - \vartheta) 
      } ) }^2
    \Big]
  \right)
\\&\leq
\nonumber
  \frac{ 
    2 
    \left| c_{ \gamma - \vartheta } 
    \right|^2 
    \left( T + 1 \right) 
  }{ 
    ( 1 - 2 \vartheta ) 
  }
  \left( 
    1 + 
    \sup_{ s \in [0,T] }
    \mathbb{E}\Big[ \left\| 
      X_s 
    \right\|_{ 
      D( (-A)^{ (\gamma - \vartheta) } ) 
    }^2 \Big]
  \right)
  <
  \infty
\end{align}
for all $ t \in [0,T] $, 
$ \vartheta \in (\gamma - 
\frac{ \min(1,r-1) }{ 4 }, \frac{1}{2} ) $
and all $ \gamma \in [\frac{1}{2}, \frac{ \min(3, r+1 )}{4} ) $. We would like to point out that 
due to \eqref{eq:boot1}
the right hand side 
of \eqref{eq:newX1} is indeed 
finite.
Of course, \eqref{eq:newX1} 
then shows that $ \int_0^t e^{ A( t - s ) } B( X_s ) \, dW_s $,
$ t \in [0,T] $, 
has a modification with 
values in $ D( (-A)^{ \gamma } ) $
for all
$ \gamma 
\in 
[0, \frac{ \min(3, r+1 )}{4} ) $ 
and thus, \eqref{eq:result} holds 
for all $ t \in [0,T] $ and 
all 
$ 
  \gamma \in 
  [0, \frac{ \min(3, r+1 )}{4} ) 
$.
% To sum it up, the central idea here
% for proving \eqref{eq:result}
% for the SPDE~\eqref{eq:SPDE1}
% is the observation that
% $
%   B \colon H \rightarrow HS( U_0, H )
% $
% in \eqref{eq:Dprocess_X} 
% fulfills
% the linear growth 
% estimate~\eqref{eq:X2}
% although it in general
% fails to satisfy the
% corresponding Lipschitz
% estimate.

Regularities of nonlinear SPDEs as analyzed here have also been investigated 
in Zhang's instructive paper \cite{z07}. 
In contrast to the results in this
article, he investigated which conditions on the coefficients
and the noise of an SPDE suffice to ensure that the solution
process of the SPDE is infinitely often differentiable in 
the spatial variable, see Theorem 6.2 in \cite{z07}. The solution
process of \eqref{eq:SPDE1} 
in which we are interested 
is in general not 
twice differentiable
in the spatial variable
and therefore, Theorem 6.2 
in \cite{z07} can in general
not be applied to the 
SPDE~\eqref{eq:SPDE1} here. 

The rest of this article is organized as follows. In Section~\ref{sec:setting} the setting and assumptions used are formulated. Our main result, 
Theorem~\ref{thm:existence}, which states existence, uniqueness and regularity
of solutions of an SPDE 
with nonlinear multiplicative
trace class noise is presented 
in Section~\ref{sec:main}. This result is illustrated by various examples in Section~\ref{sec:examples}. 
The proof of Theorem~\ref{thm:existence} is postponed to the final section.
%To sum up, the regularity
%of nonlinear SPDEs of the form \eqref{eq:SPDE1} has not yet %been analyzed to the best of my knowledge.
%
%
\section{Setting and assumptions}
\label{sec:setting}
Throughout this article assume 
that the following setting
is fulfilled.

Let $ T \in (0, \infty) $ be a
real number, let 
$ \left( \Omega, \mathcal{F}, \mathbb{P} \right) $ 
be a probability space 
with a normal filtration
$ ( \mathcal{F}_t )_{ t \in [0,T] } $
and let
$ 
  \left( 
    H, 
    \left< \cdot , \cdot \right>_H, 
    \left\| \cdot \right\|_H
  \right) $ 
and
$ 
  \left( 
    U, 
    \left< \cdot , \cdot \right>_U, 
    \left\| \cdot \right\|_U 
  \right) 
$ 
be two separable
$ \mathbb{R} 
$-Hilbert spaces.
Moreover, let 
$ Q \colon U \rightarrow U $ 
be a trace class operator and 
let 
$ 
  W \colon [0,T] 
  \times \Omega \rightarrow U 
$ 
be a standard $ Q $-Wiener 
process with respect 
to $ ( \mathcal{F}_t )_{ t \in [0,T] } $.
\begin{assumption}[Linear 
operator A]\label{semigroup}
Let $ A \colon D(A) \subset H
\rightarrow H $ be a closed and densely
defined linear operator
which generates a stronlgy 
continuous analytic semigroup
$ e^{ A t } \in L(H) $, $ t \in [0,\infty) $.
\end{assumption}
Let $ \eta \in [0,\infty) $ be a nonnegative real number such that
$
  \sigma(A)
  \subset
  \{ 
    \lambda \in \mathbb{C}
    \colon
    \text{Re}( \lambda ) < \eta
  \}
$ where 
$ \sigma(A) \subset \mathbb{C} 
$ denotes 
as usual the spectrum
of the linear
operator 
$ A \colon D(A) \subset
H \rightarrow H $. 
Such a real number exists
since $ A $ is assumed to be a generator
of a strongly continuous semigroup
(see Assumption~\ref{semigroup}).
By 
$ V_r := 
D\!\left( 
  \left( \eta - A \right)^r 
\right) 
\subset H 
$
equipped with the norm
$ 
  \left\| v \right\|_{ V_r }
  := 
  \left\| 
    \left( \eta - A \right)^r \! v 
  \right\|_H 
$
for all $ v \in V_r $ 
and all $ r \in [0,\infty) $ we denote
the $ \mathbb{R} $-Hilbert spaces
of domains of fractional powers 
of the linear operator 
$ \eta - A \colon D(A) \subset H
\rightarrow H $
(see, e.g., Subsection~11.4.2
in Renardy and
Roggers~\cite{rr93}).
\begin{assumption}[Drift term $F$]\label{drift}
Let 
$ F \colon H \rightarrow H $ 
be a globally
Lipschitz continuous mapping.
\end{assumption}
In order to formulate the assumption on the 
diffusion coefficient of our SPDE, we denote by
$ 
  \left( 
    U_0, 
    \left< \cdot , \cdot \right>_{ U_0 }, 
    \left\| \cdot \right\|_{ U_0 }
  \right) 
$ 
the separable
$\mathbb{R}$-Hilbert space 
$ U_0 := Q^{ 1/2 }( U ) $
with $ \left< v, w \right>_{ U_0 } = \left< Q^{ - 1 / 2 } v, Q^{ - 1 / 2 } 
w \right>_U $ for all $ v, w \in U_0 $
(see, for example, 
Subsection~2.3.2 
in \cite{pr07}).
Here 
$ Q^{-1/2} \colon \text{im}(Q^{1/2})
\subset U \rightarrow U $ 
denotes the pseudo inverse of 
%the bounded linear operator
$ Q^{1/2} \colon U \rightarrow U $
(see, e.g., Appendix~C in \cite{pr07}
for details).
\begin{assumption}[Diffusion term $B$]\label{diffusion}
Let 
$ B \colon H \rightarrow HS(U_0,H) $ 
be a globally Lipschitz continuous
mapping and let $ \alpha \in [0,\frac{1}{2}) $,
$ c \in [0,\infty) $ be
real numbers such
that $ B( V_{ \alpha } ) \subset HS( U_0, V_{ \alpha } ) $
and
% \begin{equation}\label{condition_AA}
$
  \left\| B(v) \right\|_{ HS( U_0, V_{ \alpha } ) } 
  \leq
  c \left( 1 + \left\| v \right\|_{ V_{ \alpha } }
  \right) 
$
% \end{equation}
for all $ v \in V_{ \alpha } $.
\end{assumption}
\begin{assumption}[Initial value $\xi$]\label{initial}
Let $ \gamma \in [\alpha, \frac{1}{2} + \alpha ) $,
$ p \in [2,\infty)$ and 
let
$ 
  \xi \colon \Omega 
  \rightarrow V_{\gamma} 
$ 
be an 
$ \mathcal{F}_0 
$/$ 
\mathcal{B}\left( V_{\gamma} \right) 
$-measurable 
mapping with 
$ 
  \mathbb{E}\big[ 
    \| \xi \|^p_{   
      V_{\gamma} 
    } 
  \big] 
  < \infty 
$.
\end{assumption}
Some examples satisfying 
Assumptions \ref{semigroup}-\ref{initial} 
are presented 
in 
Section~\ref{sec:examples}.
%
%
%
% Assumptions \ref{semigroup}-\ref{initial}
% are more or less straightforward to
% check except of the linear growth
% bound in $ V_{\alpha} $, i.e.
% $$
%   \left\| B(v) \right\|_{ HS( U_0, V_{ \alpha } ) } 
%   \leq
%   c \left( 1 + \left\| v \right\|_{ V_{ \alpha } }
%   \right) 
% $$
%
%
%
\section{Main result}\label{sec:main}
The assumptions in 
Section~\ref{sec:setting} suffice 
to ensure the existence of a 
unique $ V_{ \gamma } $-valued
solution of the SPDE~\eqref{eq:SPDE}.
\begin{theorem}[Existence and regularity of 
the solution]\label{thm:existence}
Assume that the setting
in Section~\ref{sec:setting} is
fulfilled. Then there exists 
an up to modifications 
unique predictable stochastic process
$
  X \colon [0,T] 
  \times \Omega \rightarrow V_{\gamma} 
$
which fulfills
$
  \sup_{ t \in [0,T] } 
  \mathbb{E}\big[
    \|
      X_t
    \|_{ V_{ \gamma } }^p 
  \big]
  < \infty
$,
$
  \sup_{ t \in [0,T] } 
  \mathbb{E}\big[
    \|
      B(X_t)
    \|_{ HS(U_0,V_{ \alpha }) }^p
  \big]
  < \infty
$ and
\begin{equation}
\label{eq:SPDE}
%  \mathbb{P}\left[
    X_t
    = 
    e^{At} \xi 
    +
    \int_0^t e^{A(t-s)} F(X_s) \, ds 
    + 
    \int_0^t e^{A(t-s)} B(X_s) \, dW_s
%   \right]
%   =
%   1 
\end{equation}
$ \mathbb{P} $-a.s.\ for 
all $t \in [0,T]$.
Moreover, we have
\begin{equation}\label{eq:holder}
  \sup_{ 
    \substack{
      t_1, t_2 \in [0,T] 
    \\
      t_1 \neq t_2
    }
  }
  \frac{
    \left(
      \mathbb{E}\big[
      \left\| X_{ t_2 } - X_{ t_1 }
      \right\|_{ V_{ r } }^p
      \big]
    \right)^{ \frac{1}{p} }
  }{
    \left| t_2 - t_1 
    \right|^{ \min( \gamma - r, \frac{1}{2} ) }
  }
< \infty
\end{equation}
for every $ r \in [0,\gamma) $.
Additionally, the solution 
process
$ X_t$, $t \in [0,T], $
is even continuous with respect to
$ 
  \big( \mathbb{E}\big[ 
      \| \! \cdot \!
      \|_{ V_{ \gamma } }^p
    \big]
  \big)^{ \frac{1}{p} } 
$.
\end{theorem}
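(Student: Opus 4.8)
The plan is to construct the solution by a Banach fixed point argument in a space of $V_\gamma$-valued predictable processes and then to bootstrap, exactly in the spirit of the calculation already sketched in the introduction. First I would fix $p\in[2,\infty)$ and consider the Banach space
\[
  \mathcal{H}
  :=
  \Big\{
    Y\colon[0,T]\times\Omega\to V_\gamma
    \text{ predictable}
    \ :\
    \|Y\|_{\mathcal{H}}
    :=
    \sup_{t\in[0,T]}
    \big(\mathbb{E}\big[\|Y_t\|_{V_\gamma}^p\big]\big)^{1/p}
    <\infty
  \Big\},
\]
and define on it the map $\Phi$ by
\(
  (\Phi Y)_t
  =
  e^{At}\xi
  +\int_0^t e^{A(t-s)}F(Y_s)\,ds
  +\int_0^t e^{A(t-s)}B(Y_s)\,dW_s.
\)
The first task is to show $\Phi$ maps $\mathcal{H}$ into itself. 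The deterministic term is controlled since $\|e^{At}\xi\|_{V_\gamma}\le C\|\xi\|_{V_\gamma}$ (the semigroup restricts to a bounded one on $V_\gamma$, because $\eta-A$ commutes with $e^{At}$). For the drift integral I would split $(-\eta+A)^\gamma e^{A(t-s)}=(-\eta+A)^{\gamma}e^{A(t-s)}$ and use the analytic smoothing bound $\|(\eta-A)^{\theta}e^{A(t-s)}\|_{L(H)}\le M_\theta (t-s)^{-\theta}e^{\eta (t-s)}$ together with the global Lipschitz (hence linear growth) bound on $F\colon H\to H$; since $\gamma<1$ this gives an integrable singularity $(t-s)^{-\gamma}$. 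For the stochastic integral I would invoke a Burkholder--Davis--Gundy / factorization-type maximal inequality for stochastic convolutions (e.g.\ Da Prato--Zabczyk, or van Neerven et al.\ \cite{vvw08}), reducing the $V_\gamma$-norm of the convolution to
\(
  \int_0^t (t-s)^{-2\vartheta}\,\mathbb{E}\big[\|B(Y_s)\|_{HS(U_0,V_{\gamma-\vartheta})}^p\big]^{2/p}\,ds,
\)
and here is where Assumption~\ref{diffusion} enters: choosing $\vartheta\in(\gamma-\alpha,\tfrac12)$ (possible precisely because $\gamma<\tfrac12+\alpha$), we have $\gamma-\vartheta<\alpha$, so $B$ maps $V_{\gamma-\vartheta}\supset V_\alpha$-valued elements into $HS(U_0,V_{\gamma-\vartheta})$ with the linear growth bound $\|B(v)\|_{HS(U_0,V_{\gamma-\vartheta})}\le c(1+\|v\|_{V_{\gamma-\vartheta}})\le c(1+\|v\|_{V_\gamma})$, and the resulting time integral of $(t-s)^{-2\vartheta}$ converges.

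Next I would establish the contraction. Replacing linear growth by the global Lipschitz constants of $F$ on $H$ and of $B$ on $H$ (into $HS(U_0,H)$), the same estimates give
\(
  \|\Phi Y-\Phi Z\|_{\mathcal{H},[0,\tau]}
  \le
  C(\tau^{1-\gamma}+\tau^{1/2-\vartheta})
  \,\|Y-Z\|_{\mathcal{H},[0,\tau]},
\)
where $\|\cdot\|_{\mathcal{H},[0,\tau]}$ is the analogous norm over $[0,\tau]$. Choosing $\tau>0$ small makes this a strict contraction, yielding a unique fixed point on $[0,\tau]$; iterating over successive intervals $[\tau,2\tau],\dots$ (with deterministic step size, since the constant $C$ does not depend on the starting time) gives existence and uniqueness of a predictable $X$ on all of $[0,T]$ with $\sup_t\mathbb{E}[\|X_t\|_{V_\gamma}^p]<\infty$. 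Because $B(V_\alpha)\subset HS(U_0,V_\alpha)$ with linear growth and $\gamma\ge\alpha$, the bound $\sup_t\mathbb{E}[\|B(X_t)\|_{HS(U_0,V_\alpha)}^p]<\infty$ follows immediately from the $V_\gamma$-bound on $X$.

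For the temporal regularity \eqref{eq:holder}, I would write, for $t_1<t_2$ and $r\in[0,\gamma)$,
\(
  X_{t_2}-X_{t_1}
  =(e^{A(t_2-t_1)}-\mathrm{Id})X_{t_1}
  +\int_{t_1}^{t_2}e^{A(t_2-s)}F(X_s)\,ds
  +\int_{t_1}^{t_2}e^{A(t_2-s)}B(X_s)\,dW_s,
\)
and estimate each term in $L^p(\Omega;V_r)$. The first term is bounded by $\|(e^{A(t_2-t_1)}-\mathrm{Id})(\eta-A)^{-(\gamma-r)}\|_{L(H)}\,\|X_{t_1}\|_{V_\gamma}\le C|t_2-t_1|^{\gamma-r}\|X_{t_1}\|_{V_\gamma}$, using the standard bound $\|(e^{Ah}-\mathrm{Id})(\eta-A)^{-\rho}\|_{L(H)}\le C h^{\rho}$ for $\rho\in[0,1]$. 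The drift term contributes $C|t_2-t_1|^{1-r}$ via the smoothing bound and the $V_\gamma$-bound on $X$ (note $1-r>\min(\gamma-r,\tfrac12)$). The stochastic term, again by the BDG/maximal inequality, contributes $C|t_2-t_1|^{1/2-(r-\alpha)_+}$ or similar; in all cases the exponent dominates $\min(\gamma-r,\tfrac12)$, which yields \eqref{eq:holder}. Finally, Kolmogorov's continuity theorem applied to the $V_\gamma$-valued process (using \eqref{eq:holder} with $r$ close to $\gamma$ and $p$ large, or rather the moment bound in $V_{\gamma}$ combined with the increments) gives mean-square (more precisely, $L^p(\Omega;V_\gamma)$) continuity of $t\mapsto X_t$.

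The main obstacle, and the conceptual heart of the argument, is the very step highlighted in the introduction: $B$ is \emph{not} Lipschitz from $V_\gamma$ into $HS(U_0,V_\gamma)$ for $\gamma>0$, so one cannot simply close a fixed point in $\mathcal{H}$ using a $V_\gamma\to HS(U_0,V_\gamma)$ Lipschitz bound. The resolution is to carry out the fixed point with the \emph{contraction} estimate measured only in the $H$-norm of $B$ (where $B$ \emph{is} Lipschitz) while the \emph{a priori bound} uses the weaker linear growth bound into $HS(U_0,V_{\gamma-\vartheta})$; the analytic smoothing exponent $\vartheta<\tfrac12$ must be simultaneously large enough to absorb the gap $\gamma-\alpha$ and small enough to keep $(t-s)^{-2\vartheta}$ integrable, which is exactly the arithmetic constraint $\gamma<\tfrac12+\alpha$ in Assumption~\ref{initial}. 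Getting these two roles of $B$ to coexist cleanly in one fixed point scheme — or, equivalently, first solving in $\mathcal{H}_0$ with $\gamma$ replaced by $\alpha$ and then bootstrapping the regularity of the already-constructed solution up to $V_\gamma$ as in \eqref{eq:newX1} — is where the care is needed; everything else is a routine combination of semigroup smoothing estimates and the Burkholder inequality for stochastic convolutions.
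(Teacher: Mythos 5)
Your overall architecture (fixed point for the mild formulation, smoothing of the analytic semigroup, linear growth of $B$ into $HS(U_0,V_{\alpha})$ for the a priori bound, $H$-level Lipschitz continuity for the contraction) matches the paper, and your treatment of the H\"older estimate and of the bound on $\sup_t \mathbb{E}[\|B(X_t)\|^p_{HS(U_0,V_\alpha)}]$ is sound. However, the primary route you write down has a genuine gap in the contraction step. You set up the fixed point in the space $\mathcal{H}$ of $V_\gamma$-valued processes and claim
$\|\Phi Y-\Phi Z\|_{\mathcal{H},[0,\tau]}\le C(\tau^{1-\gamma}+\tau^{1/2-\vartheta})\|Y-Z\|_{\mathcal{H},[0,\tau]}$
using only the global Lipschitz continuity of $B\colon H\to HS(U_0,H)$. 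But measuring $\Phi Y-\Phi Z$ in $V_\gamma$ while only controlling $B(Y_s)-B(Z_s)$ in $HS(U_0,H)$ forces the full smoothing order $\gamma$ onto the semigroup, i.e.\ the It\^o-isometry/BDG bound produces $\int_0^\tau (\,\cdot\,)^{-2\gamma}\,ds$, which diverges as soon as $\gamma\ge\tfrac12$ --- and $\gamma$ may be as large as $\tfrac12+\alpha$. The exponent $\tau^{1/2-\vartheta}$ you wrote would require $B$ to be Lipschitz into $HS(U_0,V_{\gamma-\vartheta})$ with $\gamma-\vartheta>0$, which is exactly the property that Assumption~\ref{diffusion} does \emph{not} provide (only linear growth is assumed at positive smoothness, and the introduction stresses that Lipschitz continuity there fails in general). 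So the contraction does not close in $\mathcal{H}$ for $\gamma\ge\tfrac12$.

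The fix is the alternative you mention only in passing at the end, and it is precisely what the paper does: run the Banach fixed point argument in the space of $V_{\alpha}$-valued processes, where the contraction needs only $\|(\eta-A)^{\alpha}e^{A(t-s)}\|_{L(H)}^2\lesssim (t-s)^{-2\alpha}$ with $2\alpha<1$ together with the $H$-level Lipschitz bounds on $F$ and $B$; then observe that the well-definedness estimates (which use the linear growth bound of Assumption~\ref{diffusion} and the gap $\gamma-\alpha<\tfrac12$, as in \eqref{eq:newX1}) show that $\Phi$ in fact maps $V_{\alpha}$-valued processes into $V_{\gamma}$-valued ones, so the fixed point automatically has a $V_{\gamma}$-valued modification with the stated moment bound. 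Uniqueness in the $V_\gamma$-class follows since it is contained in the $V_\alpha$-class. (Two minor points: the paper obtains the contraction on all of $[0,T]$ at once via an exponentially weighted norm $\sup_t e^{ut}\|\cdot\|_{L^p(\Omega;V_\alpha)}$ rather than by iterating over short intervals --- both are fine; and the $L^p(\Omega;V_\gamma)$-continuity in $t$ follows directly from the increment estimates, no appeal to Kolmogorov's continuity theorem is needed or appropriate there.)
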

The proof of Theorem \ref{thm:existence} 
is given in 
Section~\ref{secproofs}. 
The parameters 
$ \alpha \in [0,\frac{1}{2}) $,
$ \gamma \in [\alpha,\frac{1}{2} + \alpha) $
and 
$ p \in [2,\infty) $ used in 
Theorem \ref{thm:existence}
are given in Assumptions \ref{diffusion}
and \ref{initial}.

Estimate~\eqref{eq:holder}
and the continuity of 
the solution process
$ X_t $, $ t \in [0,T] $, 
with respect to 
$ 
  \big( \mathbb{E}\big[ 
      \| \! \cdot \!
      \|_{ V_{ \gamma } }^p
    \big]
  \big)^{ \frac{1}{p} } 
$
as asserted
in Theorem~\ref{thm:existence}
can also be written as
\begin{equation}
  X \in 
  \cap_{ r \in [0,\gamma] } \,
  \mathcal{C}^{ \min(\gamma-r,\frac{1}{2}) 
  }\big( [0,T], 
  L^p( \Omega ; V_r )
  \big) .
\end{equation}
Let us complete this section with
the following remarks.

In this article we investigate
predictable 
$ V_{ \gamma } $-valued
solution processes
of the SPDE~\eqref{eq:SPDE}.
For results analyzing 
continuity of sample paths
for 
$ H $-valued
solution processes
of SPDEs of the form~\eqref{eq:SPDE},
the reader is referred to
P.~Kotelenez~\cite{k82}
and
L.~Tubaro~\cite{t84}, for instance.

If the initial value $ X_0 = \xi $ 
of the SPDE~\eqref{eq:SPDE} above
is $H$-valued
only, then $X_t$
takes values in $ V_r $ for 
all $ r < \frac{1}{2} + \alpha $
and all $ t \in (0,T] $
nevertheless.
More formally, if 
Assumptions~\ref{semigroup}-\ref{diffusion}
are fulfilled and
if $ \xi \colon \Omega \rightarrow H $
is an 
$ \mathcal{F}_0 
$/$ \mathcal{B}(H)
$-measurable 
mapping 
with 
$ 
  \mathbb{E}\big[
  \| \xi \|_H^p \big]
  < \infty 
$ 
for some $ p \in [2,\infty) $,
then Theorem~\ref{thm:existence}
shows the existence of a predictable
solution process 
$ X \colon [0,T] \times \Omega 
\rightarrow H $ of 
$ \eqref{eq:SPDE} $ and this
process additionally satisfies
$ \mathbb{P}\left[ 
X_t \in V_r \right]=1 $
with 
$ \mathbb{E}\big[ 
\| X_t 
\|_{ V_r }^p
\big] < \infty $ for all
$ r \in 
[0,\frac{1}{2}+\alpha) $
and all $ t \in (0,T] $.

\section{Examples}\label{sec:examples}
In this section Theorem \ref{thm:existence} is illustrated with various examples. 
To this end let $ d \in \mathbb{N} $ 
and let 
$ H = U = L^2( (0,1)^d, \mathbb{R} ) $ 
be the $ \mathbb{R} $-Hilbert space 
of equivalence classes 
of $ \mathcal{B}( (0,1)^d ) 
$/$ \mathcal{B}( \mathbb{R} ) $-measurable 
and Lebesgue square integrable functions from $ (0,1)^d $ to $ \mathbb{R} $. 
As usual we do not distinguish between a
square integrable function from $ (0,1)^d $
to $ \mathbb{R} $ and its
equivalence class in $ H $.
For simplicity we restrict our attention to the domain $ (0,1)^d $ although more complicated domains in $ \mathbb{R}^d $ could be considered. The scalar product and the norm in $ H $ and $ U $ are given by
\[
  \left< v,w \right>_H
  =
  \left< v,w \right>_U
  =
  \int_{ (0,1)^d }
  v( x ) \cdot w( x ) \, dx
\]
and
\[
  \left\| v \right\|_H
  =
  \left\| v \right\|_U
  =
  \left(
    \int_{ (0,1)^d }
    \left| v( x ) \right|^2 dx
  \right)^{ \! \frac{1}{2} }
\]
for all $ v,w \in H = U $. Moreover, the Euclidean norm $ \left\| x \right\|_{ \mathbb{R}^d } := ( | x_1 |^2 + \ldots + | x_d |^2 )^{ \frac{1}{2} } $ for all $ x = ( x_1, \ldots, x_d ) \in \mathbb{R}^d $ is used here. 
Additionally, the notations
\[
  \left\| v \right\|_{ C\left( (0,1)^d, \mathbb{R} \right) }
  :=
  \sup_{ x \in (0,1)^d }
  \left| v( x ) \right| \in [0,\infty]
\]
and
\[
  \left\| v \right\|_{ C^r\left( (0,1)^d, \mathbb{R} \right) }
  :=
  \sup_{ x \in (0,1)^d }
  \left| v( x ) \right|
  +
  \sup_{ 
    \substack{
      x, y \in (0,1)^d 
    \\ 
      x \neq y 
    } 
  }
  \frac{ \left| v( x ) - v( y ) \right| }{ \left\| x - y \right\|^r_{ \mathbb{R}^d } }  \in [0,\infty]
\]
for all $ r \in (0,1] $ and 
all functions 
$ v \colon (0,1)^d \rightarrow \mathbb{R} $ 
are used in this section. 
We also define 
\begin{multline*}
  \left\| v \right\|_{ W^{ r,2 }\left( (0,1)^d, \mathbb{R} \right) }
\\  :=
  \Bigg(
    \int_{ (0,1)^d }
    \left| v( x ) \right|^2 dx
    +
    \int_{ (0,1)^d }
    \int_{ (0,1)^d }
    \frac{ \left| v( x ) - v( y ) \right|^2 }{ \left\| x - y \right\|^{ (d+ 2r ) }_{ \mathbb{R}^d } } \, dx \, dy
  \Bigg)^{ \frac{1}{2} } \in [0,\infty]
\end{multline*}
for all 
$ \mathcal{B}( (0,1)^d ) 
$/$
  \mathcal{B}( \mathbb{R} ) 
$-measurable functions 
$ 
  v \colon (0,1)^d \rightarrow \mathbb{R} 
$ 
and all $ r \in (0,1) $. 
Finally, we denote 
by $ v \cdot w : (0,1)^d \rightarrow \mathbb{R} $ the function
\[
  \left( v \cdot w \right)( x )
  =
  v( x ) \cdot w( x ),
  \qquad
  x \in (0,1)^d,
\]
for every $ v, w : (0,1)^d \rightarrow \mathbb{R} $.
Concerning the covariance operator 
of the Wiener process, let $ \mathcal{J} $ 
be a countable set, 
let $ ( g_j )_{ 
j \in \mathcal{J} } \subset U 
$ be an orthonormal basis of 
eigenfunctions of 
$ Q \colon U \rightarrow U $ 
and let 
$ ( \mu_j )_{ j \in \mathcal{J} } 
\subset [0,\infty) $ 
be the corresponding family 
of eigenvalues (such an 
orthonormal basis of 
eigenfunctions exists since 
$ Q \colon U \rightarrow U $ 
is a trace class operator, 
see Proposition~2.1.5 
in \cite{pr07}). 
In particular, we have
\[
  Q u
  =
  \sum_{ j \in \mathcal{J} }
  \mu_j
  \left< g_j, u \right>_U g_j
\]
for all $ u \in U $. Furthermore, we assume 
in this section that the 
eigenfunctions  $g_j \in U$, 
$ j \in \mathcal{J} $, are 
continuous and satisfy
\begin{equation}\label{ass_eigenf}
  \sup_{ j \in \mathcal{J} }
  \left\| g_j \right\|_{ C\left( (0,1)^d, \mathbb{R} \right) } < \infty
  \quad \text{and} \quad
  \sum_{ j \in \mathcal{J} }
  \left(
    \mu_j \left\| g_j \right\|^2_{ C^{\delta}\left( (0,1)^d, \mathbb{R} \right) }
  \right) < \infty
\end{equation}
for some $ \delta \in (0,1] $. We will give some concrete examples for $ ( g_j )_{ j \in \mathcal{J} } $ fulfilling \eqref{ass_eigenf} later. 

For {\bf the linear operator in 
Assumption \ref{semigroup}},
let $ \kappa \in (0,\infty) $ be a fixed real number, let $ \mathcal{I} = \mathbb{N}^d $ and let $ \lambda_i \in \mathbb{R} $, $ i \in \mathcal{I} $, and $ e_i \in H $, $ i \in \mathcal{I} $, be given by
\[
  \lambda_i 
  =
  \kappa
  \,
  \pi^2
  \big( 
    |i_1|^2 
    + \ldots 
    + |i_d|^2 
  \big),
\qquad
  e_i( x )
  =
  2^{ \frac{d}{2} }
  \sin( i_1 \pi x_1 )
  \cdot \ldots \cdot
  \sin( i_d \pi x_d ) 
\]
for all $ x \in ( x_1, \ldots, x_d ) \in (0,1)^d $ and all $ i = ( i_1, \ldots, i_d ) \in \mathbb{N}^d $. 
Next let $$ D(A) = \left\{ v \in H \colon
   \sum_{ i \in \mathcal{I} }
   \left|
     \lambda_i
   \right|^2
   \left| 
     \left< e_i, v \right>_H
   \right|^2 < \infty
 \right\} $$ and let
 $$
   A v = 
   \sum_{ i \in \mathcal{I} }
   - \lambda_i
   \left< e_i, v \right>_H
   e_i
 $$
 for all $ v \in D(A) $.
Hence, 
the linear 
operator $ A : D( A ) \subset H \rightarrow H $ 
in Assumption~\ref{semigroup} 
is nothing else but the Laplacian with Dirichlet boundary conditions 
times the constant $ \kappa \in (0,\infty) $, i.e. 
\begin{equation}
  A v
  =
  \kappa \cdot \Delta v
  =
  \kappa
  \left(
    \left( 
      \frac{ \partial^2 }{ \partial x_1^2 } 
    \right) v
    + \ldots +
    \left( 
      \frac{ \partial^2 }{ \partial x_d^2 } 
    \right) v
  \right)
\end{equation}
holds for all $ v \in D( A ) $ in this subsection 
(see, for instance, 
Subsection~3.8.1 in \cite{sy02}).

In view of 
{\bf the drift term in Assumption \ref{drift}}, 
let 
$ 
  f \colon (0,1)^d \times 
  \mathbb{R} \rightarrow \mathbb{R} 
$ 
be a 
$ 
  \mathcal{B}( 
    (0,1)^d \! \times \!
    \mathbb{R}
  )/\mathcal{B}( \mathbb{R} ) 
$-measurable function 
with 
$ 
  \int_{ (0,1)^d } 
  \left| f( x, 0 ) \right|^2 dx 
  < \infty 
$ 
and 
\begin{equation}
  \sup_{ x \in (0,1)^d } 
  \sup_{ 
    \substack{
      y_1, y_2 \in \mathbb{R}  
      \\
      y_1 \neq y_2
    }	    
  } 
  \left(
  \frac{
    \left| 
      f( x, y_1 ) -
      f( x, y_2 ) 
    \right| 
  }{
    \left| y_1 - y_2 \right|
  }
  \right)
  < \infty  .
\end{equation}
Then the (in general nonlinear) operator 
$ F \colon H \rightarrow H $ 
given by 
\begin{equation}
  \left( F( v ) \right)\!( x )
  =
  f( x, v( x ) ),
  \qquad
  x \in (0,1)^d,
\end{equation}
for all $v \in H $ satisfies 
Assumption~\ref{drift}, i.e. 
\begin{equation}
  \sup_{ 
    \substack{ v,w \in H \\ v \neq w } 
  }
  \frac{ 
    \left\| F( v ) - F( w ) \right\|_H 
  }{ 
    \left\| v - w \right\|_H 
  } < \infty
\end{equation}
holds. 

We now describe a class of 
{\bf diffusion terms satisfying Assumption \ref{diffusion}}. 
To this end 
let $ q \in [0,\infty) $
be a real number and let
$ 
  b \colon (0,1)^d \times 
  \mathbb{R} 
  \rightarrow \mathbb{R} 
$ 
be a function 
satisfying 
\begin{equation}\label{eq:qbound} 
  \left|
    b( x_1, y_1 ) - b( x_2, y_2 )
  \right|
  \leq
  q \,
  \big(
    \left\|
      x_1 - x_2
    \right\|_{ \mathbb{R}^d }
    +
    \left|
      y_1 - y_2
    \right|
  \big)
\end{equation}
for all $ x_1, x_2 \in (0,1)^d $
and all $ y_1, y_2 \in \mathbb{R} $.
In addition, we assume for simplicity
that 
$ 
  \int_{ (0,1)^d }
  \left| b( x, 0 ) \right|^2 
  dx
  \leq q^2
$.
Then let 
$ B \colon H \rightarrow HS( U_0, H ) $ 
be the (in general nonlinear) 
operator given by
\begin{equation}\label{ass_B}
  \Big( B( v ) u \Big)( x )
  =
  \Big( b( \cdot, v ) \cdot u \Big)( x )
  =
  b( x, v( x ) ) \cdot u( x ),
  \qquad
  x \in (0,1)^d,
\end{equation}
for all $ v \in H $ and all 
$ u \in U_0 \subset U $. 
We now check step by step 
that 
$ 
  B \colon H \rightarrow 
  HS( U_0, H ) 
$ 
given by \eqref{ass_B} 
satisfies 
Assumption~\ref{diffusion}. 
First of all, $ B $ is well defined. 
Indeed, we obviously have 
$ U_0 \subset L^{ \infty }( (0,1)^d , \mathbb{R} ) $ 
continuously due to \eqref{ass_eigenf}
and 
therefore, 
$ 
  B( v ) \colon U_0 \rightarrow H 
$ 
is a 
bounded linear operator from $ U_0 $ to $ H $ 
for every $ v \in H $. Moreover, we have
\begin{align*}
  \left\| B( v ) \right\|^2_{ HS( U_0, H ) }
 &=
  \sum_{ j \in \mathcal{ J } }
  \left\|
    B( v ) \sqrt{ \mu_j } g_j 
  \right\|_H^2
  =
  \sum_{ j \in \mathcal{ J } }
  \mu_j
  \left\|
    B( v ) g_j 
  \right\|_H^2
\\&=
  \sum_{ j \in \mathcal{ J } }
  \mu_j
  \left(
    \int_{ (0,1)^d }
    \left| b( x, v( x ) ) \cdot g_j( x ) \right|^2 dx
  \right)
\\&\leq 
  \sum_{ j \in \mathcal{ J } }
  \mu_j
  \left(
    \int_{ (0,1)^d }
    \left| b( x, v( x ) ) \right|^2 dx
  \right)
  \left(
    \sup_{ x \in (0,1)^d }
    \left| g_j( x ) \right|^2
  \right)
\end{align*}
and hence
\begin{align*}
  \left\|
    B( v ) 
  \right\|_{ HS( U_0, H ) }
 &\leq 
  \left\|
    b( \cdot, v )
  \right\|_H
  \left(
    \sum_{ j \in \mathcal{J} }
    \mu_j
  \right)^{ \frac{1}{2} }
  \left(
    \sup_{ j \in \mathcal{J} }
    \left\|
      g_j
    \right\|_{ C( (0,1)^d, \mathbb{R} ) }
  \right)
\\&=
  \left\|
    b( \cdot, v )
  \right\|_H
  \sqrt{ \text{Tr}( Q ) }
  \left(
    \sup_{ j \in \mathcal{J} }
    \left\|
      g_j
    \right\|_{ C( (0,1)^d, \mathbb{R} ) }
  \right)
  < \infty
\end{align*}
for all $ v \in H $ which shows 
that 
$ 
  B \colon H \rightarrow HS( U_0, H ) 
$ 
is well defined. 
Moreover, 
$ 
  B \colon H \rightarrow HS( U_0, H ) 
$ 
is globally Lipschitz continuous. 
More precisely, we have
\begin{align*}
 &\left\| B( v ) - B( w ) \right\|^2_{ HS( U_0, H ) }
  =
  \sum_{ j \in \mathcal{J} }
  \mu_j
  \left\|
    \left( B( v ) - B( w ) \right) g_j
  \right\|_H^2
\\&=
  \sum_{ j \in \mathcal{J} }
  \mu_j
  \left( 
    \int_{ (0,1)^d }
    \left| b( x, v( x ) ) - b( x, w( x ) ) \right|^2
    \left| g_j( x ) \right |^2 dx
  \right)
\\&\leq
  \left(
    \sum_{ j \in \mathcal{J} }
    \mu_j
  \right)
  \left( 
    \int_{ (0,1)^d }
      \left| b( x, v( x ) ) - b( x, w( x ) ) 
      \right|^2 
    dx
  \right)
  \left(
    \sup_{ j \in \mathcal{J} }
    \left\| g_j 
    \right\|_{ 
      C\left( (0,1)^d, \mathbb{R} \right) 
    }^2
  \right)
\end{align*}
and therefore
\begin{align*}
  \left\| B( v ) - B( w ) 
  \right\|_{ HS( U_0, H ) }
& \leq 
  q \left\| v - w \right\|_H
  \left( 
    \sum\nolimits_{ 
      j \in \mathcal{J} 
    } 
    \mu_j 
  \right)^{ \frac{1}{2} }
  \left(
    \sup_{ j \in \mathcal{J} }
    \left\| g_j \right\|_{ C\left( (0,1)^d, \mathbb{R} \right) }
  \right)
\\&=
  q \sqrt{ \text{Tr}( Q ) }
  \left(
    \sup_{ j \in \mathcal{J} }
    \left\| g_j \right\|_{ C\left( (0,1)^d, \mathbb{R} \right) }
  \right)
  \left\| v - w \right\|_H
\end{align*}
for all $ v, w \in H $. Hence, it remains 
to check
\begin{equation}\label{what_remains_to_check}
  B( V_{\alpha} )
  \subset 
  HS( U_0, V_{\alpha} )
  \quad \text{and} \quad 
  \left\| B( v ) \right\|_{ HS( U_0, V_{\alpha} ) }
  \leq 
  c \left( 1 + \left\| v \right\|_{ V_{\alpha} } \right)
\end{equation}
for every 
$ v \in V_{ \alpha } $ 
for appropriate 
$ \alpha \in [0, \frac{1}{2} ) $, $ c \in [0,\infty) $. 
In order to verify \eqref{what_remains_to_check}, 
several preparations are needed.
First, we review appropriate 
characterizations 
of the spaces 
$ 
  \left( 
    V_r, \left\| \cdot \right\|_{ V_r } 
  \right)
$, 
$ r \in (0,\frac{1}{2}) $,
from the literature. 
More formally, it is known that
\begin{equation}
\label{eq:vr1}
  V_r =
  \left\{ v \in H 
    \colon 
    \left\| v 
    \right\|_{ 
      W^{ 2r, 2 }( (0,1)^d, \mathbb{R} ) 
    } 
    < \infty
  \right\}
\end{equation}
holds for all $ r \in (0,\frac{1}{4}) $, that
\begin{equation}
\label{eq:vr2}
  V_r =
  \left\{ v \in H 
    \colon 
    \left\| v 
    \right\|_{ 
      W^{ 2r, 2 }( (0,1)^d, \mathbb{R} ) 
    } < \infty, \,
    v\big|_{ \partial(0,1)^d } \equiv 0
  \right\}
\end{equation}
holds for all $ r \in (\frac{1}{4}, \frac{1}{2}) $ and that there are real numbers $ C_r \in [1,\infty) $, $ r \in ( 0,\frac{1}{2} ) $, such that
\begin{equation}\label{eq:real_constant}
  \frac{1}{C_r}
  \left\| v \right\|_{ W^{ 2r, 2 }( (0,1)^d, \mathbb{R} ) }
  \leq 
  \left\| v \right\|_{ V_r }
  \leq 
  C_r
  \left\| v \right\|_{ W^{ 2r, 2 }( (0,1)^d, \mathbb{R} ) }
\end{equation}
holds for all $ v \in V_r $ and all $ r \in (0,\frac{1}{2} ) $ (see, e.g., A. Lunardi \cite{l09} or also (A.46) in \cite{dz92}). In particular, \eqref{eq:vr1} shows
\begin{equation}\label{eq:follows}
  \left\| v \right\|_{ W^{ 2r, 2 }( (0,1)^d, \mathbb{R} ) }
  < \infty
  \quad\qquad
  \Longrightarrow
  \quad\qquad
  v \in V_r
\end{equation}
for all $ \mathcal{B}( (0,1)^d ) $/$ 
\mathcal{B}( \mathbb{R} ) 
$-measurable functions 
$ v \colon (0,1)^d \rightarrow 
\mathbb{R} $ and 
all $ r \in (0,\frac{1}{4}) $. 
We remark that \eqref{eq:follows} 
does not hold for 
all $ r \in (\frac{1}{4}, \frac{1}{2}) $ 
instead of $ r \in (0, \frac{1}{4}) $ 
since a $ \mathcal{B}( (0,1)^d ) 
$/$ \mathcal{B}( \mathbb{R} ) 
$-measurable function 
$ 
  v \colon (0,1)^d \rightarrow 
  \mathbb{R} 
$ 
with 
$ 
  \left\| v 
  \right\|_{ 
    W^{ 2r, 2 }( (0,1)^d, \mathbb{R} ) 
  } < \infty 
$ 
for some 
$ r \in (0, \frac{1}{2}) $ 
does, in general, not 
fulfill the Dirichlet boundary 
conditions in \eqref{eq:vr2}.
In the next step observe that
\begin{align*}
  &
  \left\|
    v \cdot w
  \right\|^2_{ 
    W^{ r, 2 }( (0,1)^d, \mathbb{R} ) 
  }
\\&\leq 
  \int_{ (0,1)^d }
    \left| v(x) \cdot w( x ) \right|^2 
  dx
  +
  \int_{ (0,1)^d }
  \int_{ (0,1)^d }
  \frac{
    \left| 
      v(x) \cdot w(x)
      -
      v(y) \cdot w( y )
    \right|^2
  }{
    \left\| x - y \right\|^{ ( d + 2 r ) }_{ \mathbb{R}^d }
  } \, dx \, dy
\\&\leq 
  \left\| v \right\|^2_H
  \left\| w \right\|^2_{ 
    C( (0,1)^d, \mathbb{R} ) 
  }
  +
  2 \left\| w \right\|^2_{ 
    C( (0,1)^d, \mathbb{R} ) 
  }
  \int_{ (0,1)^d }
  \int_{ (0,1)^d }
  \frac{
    \left| 
      v(x) - v(y)
    \right|^2
  }{
    \left\| x - y  
    \right\|^{ 
      ( d + 2 r ) 
    }_{ \mathbb{R}^d }
  } \, dx \, dy
\\&\quad+
  2 \int_{ (0,1)^d }
  \int_{ (0,1)^d }
  \frac{
    \left| v(y) \right|^2
    \left| w( x ) - w( y ) \right|^2
  }{
    \left\| x - y \right\|^{ ( d + 2 r ) 
    }_{ \mathbb{R}^d }
  } \, dx \, dy
\\ & \leq 
  2 
  \left\| v 
  \right\|^2_{ 
    W^{ r, 2 }( (0,1)^d, \mathbb{R} ) 
  }
  \left\| w 
  \right\|^2_{ 
    C( (0,1)^d, \mathbb{R} ) 
  }
\\&\quad+
  2 
  \left\| v 
  \right\|^2_H
  \left(
    \sup_{ 
      \substack{
        x, y \in (0,1)^d
      \\
        x \neq y
      }
    }
    \frac{
      \left| w(x) - w(y) \right|^2
    }{
      \left\| x - y \right\|_{ 
        \mathbb{R}^d 
      }^{ 2 \delta }
    }
  \right)
  \left(
    \int_{ (-1,1)^d }
    \left\| y 
    \right\|^{ ( 2 \delta - d - 2 r ) } dy
  \right)
\end{align*}
for all $ v \in H $,
$ r \in (0, 1 ) $
and all
$ 
  \mathcal{B}(
    (0,1)^d
  )
$/$
  \mathcal{B}( \mathbb{R} )
$-measurable functions 
$ w \colon (0,1)^d \rightarrow \mathbb{R} 
$. The estimate
\begin{align}
\label{eq:intest}
&
  \int_{ (-1,1)^d }
  \left\| x  
  \right\|^{ z 
  }_{ \mathbb{R}^d }
  dx
\leq
  \int_{ 
    \left\{
      y \in \mathbb{R}^d 
      \colon
      \left\| y \right\|_{ \mathbb{R}^d }
      \leq \sqrt{d}
    \right\}
  } \!
  \left\| x  
  \right\|^{ z 
  }_{ \mathbb{R}^d }
  dx
\\ & =
\nonumber
  \frac{ 
    \pi^{ \frac{d}{2} }
    d 
  }{
    \Gamma( \frac{d}{2} + 1 
    )
  }
  \int_0^{ \sqrt{d} } 
    r^{ \left( z + d - 1 
    \right) } \,
  dr
\leq
  3^d
  \int_0^{ \sqrt{d} }
    r^{ \left( z + d - 1 
    \right) } \,
  dr
=
  \frac{
    3^d \,
    d^{ \frac{ ( z + d ) }{ 2 } }
  }{
    \left( z + d 
    \right)
  }
  \leq
  \frac{
    ( 3 d )^{ d }
  }{
    \left( d + z 
    \right)
  }
\end{align}
for all 
$ z \in (-d,d) $
therefore gives
\begin{align}
\label{eq:multvw}
&
  \left\|
    v \cdot w
  \right\|_{ 
    W^{ r, 2 }( (0,1)^d, \mathbb{R} ) 
  }
\nonumber
\\ & \leq 
\nonumber
  \sqrt{ 2 } 
  \left\| v 
  \right\|_{ 
    W^{ r, 2 }( (0,1)^d, \mathbb{R} ) 
  }
  \left(
    \left\| w 
    \right\|_{ 
      C( (0,1)^d, \mathbb{R} ) 
    }
    + 
    \sup_{ 
      \substack{
        x, y \in (0,1)^d
      \\
        x \neq y
      }
    }
    \frac{
      \left| w(x) - w(y) \right|
    }{
      \left\| x - y \right\|_{ 
        \mathbb{R}^d 
      }^{ \delta }
    }
    \cdot
    \frac{
      \left( 3 d \right)^{ \frac{ d }{ 2 } }
    }{ 
      \sqrt{ 2 \delta - 2 r } 
    }
  \right)
\\ & \leq 
  \left(
    \frac{
      \left( 3 d \right)^{ \frac{ d }{ 2 } }
    }{ 
      \sqrt{ \delta - r } 
    }
  \right)
  \left\| v 
  \right\|_{ 
    W^{ r, 2 }( (0,1)^d, \mathbb{R} ) 
  }
  \left\| w 
  \right\|_{ 
    C^{ \delta }( (0,1)^d, \mathbb{R} ) 
  }
\end{align}
for all 
$ 
  \mathcal{B}(
    (0,1)^d
  )
$/$
  \mathcal{B}( \mathbb{R} )
$-measurable functions 
$ v, w \colon (0,1)^d \rightarrow \mathbb{R} 
$ and
all $ r \in (0, \delta ) $
(see also Section~4 in 
H.~Triebel~\cite{t92}).
In addition, note that
the estimate
$ 
  \left( a + b \right)^2
  \leq
  2 a^2 + 2 b^2
$
for all 
$ a, b \in \mathbb{R} $
and inequality~\eqref{eq:qbound}
imply
\begin{align*}
  &\left\| b( \cdot, v ) 
  \right\|^2_{ 
    W^{ r, 2 }( (0,1)^d, \mathbb{R} ) 
  }
\\&=
  \int_{ (0,1)^d }
  \left| b( x, v( x ) ) \right|^2 dx
  +
  \int_{ (0,1)^d } 
  \int_{ (0,1)^d } 
  \frac{ 
    \left| b( x, v(x) ) - 
      b( y, v(y) ) 
  \right|^2 }{
    \left\| x - y 
    \right\|_{ \mathbb{R}^d 
    }^{ (d + 2 r ) } }
  \, dx \, dy
\\&\leq 
  \int_{ (0,1)^d } \!
  \left(
    q \left| v( x ) \right|
    +
    \left| b( x, 0 ) \right|
  \right)^2 dx 
  +
  2 \! \int_{ (0,1)^d } \!
  \int_{ (0,1)^d } \!\!\!
  \frac{ \left| b( x, v(x) ) - b( x, v(y) ) \right|^2 }{
    \left\| x - y \right\|_{ \mathbb{R}^d }^{ (d + 2 r ) } }
  dx \, dy
\\&\quad+
  2 \int_{ (0,1)^d }
  \int_{ (0,1)^d }
  \frac{ \left| b( x, v(y) ) - b( y, v(y) ) \right|^2 }{
    \left\| x - y \right\|_{ \mathbb{R}^d }^{ (d + 2 r ) } }
  \, dx \, dy
\\ & \leq
  2 \, q^2 
  \left\| 
    v 
  \right\|^2_{ 
    W^{ r, 2 }( (0,1)^d, \mathbb{R} ) 
  } 
  +
  2 \, q^2
  +
  2 \, q^2
  \int_{ (0,1)^d }
  \int_{ (0,1)^d }
  \left\| x - y 
  \right\|^{ (2-d-2r) }_{ \mathbb{R}^d }
  dx \, dy
\end{align*}
for all 
$ 
  \mathcal{B}(
    (0,1)^d
  )
$/$
  \mathcal{B}( \mathbb{R} )
$-measurable functions 
$ v \colon (0,1)^d \rightarrow 
\mathbb{R} $ and 
all $ r \in (0,1) $. 
Inequality~\eqref{eq:intest}
therefore shows
\begin{align*}
&
  \left\| b( \cdot, v ) 
  \right\|^2_{ 
    W^{ r, 2 }( (0,1)^d, \mathbb{R} ) 
  }
\leq 
  2 \, q^2 
  \left\| v 
  \right\|^2_{ 
    W^{ r, 2 }( (0,1)^d, \mathbb{R} ) 
  } 
  +
  2 \, q^2
  +
  q^2
  \frac{ 
    \left( 3d \right)^d 
  }{ 
    ( 1 - r ) 
  }
\\ & \leq 
  q^2 
  \left(
    2 
    \left\| 
      v 
    \right\|^2_{ 
      W^{ r, 2 }( (0,1)^d, \mathbb{R} ) 
    } 
    +
    \frac{ 
      2 \left( 3d \right)^d 
    }{ 
      ( 1 - r ) 
    }
  \right)
\leq 
  \left(
    \frac{ 
      q^2 \,
      2 \left( 3d \right)^d 
    }{ 
      ( 1 - r ) 
    }
  \right)
  \left(
    \left\| 
      v 
    \right\|^2_{ 
      W^{ r, 2 }( (0,1)^d, \mathbb{R} ) 
    } 
    +
    1
  \right)
\end{align*}
this finally yields
\begin{equation}\label{eq:klein_b}
  \left\| 
    b( \cdot, v ) 
  \right\|_{ 
    W^{ r, 2 }( (0,1)^d, \mathbb{R} ) 
  }
  \leq 
  \left(
    \frac{ q \left( 3 d \right)^d 
    }{ (1 - r) }
  \right)
  \left( 
    1
    +
    \left\| v \right\|_{ W^{ r, 2 }( (0,1)^d, \mathbb{R} ) }
  \right)
\end{equation}
for all 
$ 
  \mathcal{B}(
    (0,1)^d
  )
$/$
  \mathcal{B}( \mathbb{R} )
$-measurable functions 
$ v \colon (0,1)^d \rightarrow 
\mathbb{R} $ and 
all $ r \in (0,1) $. 
Combining
\eqref{eq:real_constant} 
and
\eqref{eq:klein_b}
then, in particular, shows
\begin{equation}\label{eq:klein_b2}
  \left\| b( \cdot, v ) \right\|_{ W^{ 2 r, 2 }( (0,1)^d, \mathbb{R} ) }
  \leq 
  \left(
    \frac{ q \, C_r \left( 3 d \right)^d 
    }{ (1 - 2r) }
  \right)
  \left( 
    1
    +
    \left\| v \right\|_{ V_r }
  \right)
  < \infty
\end{equation}
for all $ v \in V_r $ and all $ r \in (0,\frac{1}{2}) $.
Next we 
combine
\eqref{eq:multvw},
\eqref{eq:klein_b2}
and
\eqref{ass_eigenf}
to obtain
\begin{align}
\label{eq:boundHS}
&
  \left(
    \sum_{ j \in \mathcal{J} }
    \mu_j
    \left\| B(v) g_j \right\|_{
      W^{ 2 r, 2 }( (0,1)^d, \mathbb{R} )
    }^2
  \right)^{ \! \frac{ 1 }{ 2 } }
\nonumber
\\ & \leq
  \left(
    \frac{
      \left( 3 d \right)^{  
        \frac{ d }{ 2 }
      }
    }{ 
      \sqrt{ \delta - 2 r } 
    }
  \right)
  \left(
    \sum_{ j \in \mathcal{J} }
    \mu_j
    \left\| 
      g_j
    \right\|_{ 
      C^{ \delta }( (0,1)^d, \mathbb{R} ) 
    }^2
  \right)^{ \! \frac{ 1 }{ 2 } }
  \left\| 
    b( \cdot, v ) 
  \right\|_{ 
    W^{ 2 r, 2 }( (0,1)^d, \mathbb{R} ) 
  }
\\ & \leq
\nonumber
  \left(
    \frac{
      q \, C_r 
      \left( 3 d \right)^{  
        2 d
      }
    }{ 
      \left( 
        \delta - 2 r 
      \right)^{ 2 } 
    }
  \right)
  \left(
    \sum_{ j \in \mathcal{J} }
    \mu_j
    \left\| 
      g_j
    \right\|_{ 
      C^{ \delta }( (0,1)^d, \mathbb{R} ) 
    }^2
  \right)^{ \! \frac{ 1 }{ 2 } }
  \left(
    1 + \left\| v \right\|_{ V_r }
  \right)
< \infty
\end{align}
for all 
$ v \in V_r $
and all
$ r \in (0, \frac{\delta}{2}) $.
The Cauchy-Schwartz 
inequality and
estimate~\eqref{eq:boundHS}
then imply
\begin{align}
\label{eq:Bbound2}
&
\nonumber
  \left\| B(v) u \right\|_{
    W^{ 2 r, 2 }( (0,1)^d, \mathbb{R} )
  }
=
  \left\| 
    B(v) 
    \left( 
      \sum\nolimits_{ 
        j \in \tilde{\mathcal{J}} 
      }
      \mu_j 
      \left< g_j, u \right>_{ U_0 }
      g_j
    \right)
  \right\|_{
    W^{ 2 r, 2 }( (0,1)^d, \mathbb{R} )
  }
\\ & \leq
  \sum_{ j \in \tilde{\mathcal{J}} }
  \left(
  \mu_j \,
  \big| \!
  \left< g_j, u 
  \right>_{ U_0 }
  \! \big|
  \left\| B(v) g_j \right\|_{
    W^{ 2 r, 2 }( (0,1)^d, \mathbb{R} )
  }
  \right)
\nonumber
\\ & \leq
  \left(
  \sum_{ j \in \tilde{\mathcal{J}} }
  \big| \!
  \left< \sqrt{ \mu_j } g_j, u 
  \right>_{ U_0 }
  \! \big|^2
  \right)^{ \! \frac{ 1 }{ 2 } }
  \left(
    \sum_{ j \in \tilde{\mathcal{J}} }
    \mu_j
    \left\| B(v) g_j \right\|_{
      W^{ 2 r, 2 }( (0,1)^d, \mathbb{R} )
    }^2
  \right)^{ \! \frac{ 1 }{ 2 } }
\\ & \leq
\nonumber
  \left(
    \frac{
      q \, C_r 
      \left( 3 d \right)^{  
        2 d
      }
    }{ 
      \left( 
        \delta - 2 r 
      \right)^{ 2 } 
    }
  \right)
  \left(
    \sum_{ j \in \mathcal{J} }
    \mu_j
    \left\| 
      g_j
    \right\|_{ 
      C^{ \delta }( (0,1)^d, \mathbb{R} ) 
    }^2
  \right)^{ \! \frac{ 1 }{ 2 } }
  \left(
    1 + \left\| v \right\|_{ V_r }
  \right)
  \left\| u \right\|_{ U_0 }
< \infty
\end{align}
for all $ v \in V_{ r } $,
$ r \in (0, \frac{ \delta }{ 2 } ) $,
$ u \in U_0 $
with 
$ 
  u = \sum_{ j \in \tilde{ \mathcal{J} } 
  }
  \mu_j 
  \left< g_j, u \right>_{ U_0 }
  g_j
$ 
and all
finite subsets $ \tilde{\mathcal{J}}
\subset \mathcal{J} $ of
$ \mathcal{J} $.
This and \eqref{eq:follows}
then show
that 
$
  B(v) u \in V_{ r }
$
and that
\begin{equation}
\label{eq:Bbound}
  \left\| B(v) u \right\|_{ V_{ r } }
\leq
  \left(
    \frac{
      q \left| C_r \right|^2
      \left( 3 d \right)^{  
        2 d
      }
    }{ 
      \left( 
        \delta - 2 r 
      \right)^{ 2 } 
    }
  \right)
  \left(
    \sum_{ j \in \mathcal{J} }
    \mu_j
    \left\| 
      g_j
    \right\|_{ 
      C^{ \delta }( (0,1)^d, \mathbb{R} ) 
    }^2
  \right)^{ \! \frac{ 1 }{ 2 } }
  \left(
    1 + \left\| v \right\|_{ V_r }
  \right)
  \left\| u \right\|_{ U_0 }
\end{equation}
for all $ v \in V_{ r } $,
$ r \in (0, \min( \frac{1}{4}, 
\frac{ \delta }{ 2 } ) ) $
and all $ u \in U_0 $.
Therefore,
we obtain that
$ B(v) \in L(U_0, V_{ r } ) $
for all 
$ v \in V_{ r } $ and
all
$ r \in (0, \min( \frac{1}{4},
\frac{ \delta }{ 2 } ) ) $.
Hence, 
\eqref{eq:real_constant}
and
\eqref{eq:boundHS}
give
\begin{equation}
\begin{split}
\lefteqn{
  \sum_{ j \in \mathcal{J} }
  \left(
    \mu_j
    \left\| 
      B(v) g_j
    \right\|_{
      V_{ r }
    }^2  
  \right)
}
\\ & \leq
  \left(
    \frac{
      q \left| C_r \right|^2
      \left( 3 d \right)^{  
        2 d
      }
    }{ 
      \left( 
        \delta - 2 r 
      \right)^{ 2 } 
    }
  \right)^{ \! 2 }
  \left(
    \sum_{ j \in \mathcal{J} }
    \mu_j
    \left\| 
      g_j
    \right\|_{ 
      C^{ \delta }( (0,1)^d, \mathbb{R} ) 
    }^2
  \right)
  \left(
    1 + \left\| v \right\|_{ V_r }
  \right)^2
< \infty
\end{split}
\end{equation}
for all $ v \in V_r $
and all
$ r \in 
(0, \min( \frac{1}{4}, \frac{ \delta }{ 2 } ) 
$.
Therefore, we obtain
$
  B(v) \in HS( U_0, V_r )
$
and
\begin{equation}
\label{eq:check6a}
\begin{split}
\lefteqn{
  \left\|
    B(v)
  \right\|_{
    HS( U_0, V_r ) 
  }
}
\\ & \leq
  \left(
    \frac{
      q \left| C_r \right|^2
      \left( 3 d \right)^{  
        2 d
      }
    }{ 
      \left( 
        \delta - 2 r 
      \right)^{ 2 } 
    }
  \right)
  \left(
    \sum_{ j \in \mathcal{J} }
    \mu_j
    \left\| 
      g_j
    \right\|_{ 
      C^{ \delta }( (0,1)^d, \mathbb{R} ) 
    }^2
  \right)^{ \! \frac{ 1 }{ 2 } }
  \left(
    1 + \left\| v \right\|_{ V_r }
  \right)
< 
  \infty
\end{split}
\end{equation}
for all 
$
  v \in V_r
$
and all
$
  r \in (0, \min( \frac{1}{4}, 
  \frac{ \delta }{ 2 } ) )
$.
This finally shows that 
Assumption \ref{diffusion} 
is fulfilled for
all $ \alpha \in [ 0, \min(\frac{1}{4}, \frac{\delta}{2} ) ) $. 

Concerning {\bf the initial value 
in Assumption \ref{initial}},
let 
$ x_0 \colon [0,1]^d \rightarrow 
\mathbb{R} $ be a 
twice continuously 
differentiable 
function 
with $ x_0|_{ \partial (0,1)^d } \equiv 0 $.
Then the 
$ \mathcal{F}_0 
$/$ \mathcal{B}( V_{\gamma} ) 
$-measurable 
mapping 
$ \xi \colon \Omega \rightarrow 
V_{ \gamma } $
given by $ \xi( \omega ) = x_0 $ for all $ \omega \in \Omega $
fulfills Assumption \ref{initial} for all $ \gamma \in [\alpha, \frac{1}{2} + \alpha ) $ and all $ p \in [2, \infty) $.

Having constructed examples 
of Assumptions~\ref{semigroup}-\ref{initial}, we now formulate the SPDE
\eqref{eq:SPDE} in the setting of this section. More formally,
under the setting above the SPDE \eqref{eq:SPDE} reduces to
\begin{equation}\label{eq:SPDE_reduced}
  d X_t( x )
  =
  \Big[
    \kappa
    \Delta
    X_t( x )
    +
    f( x, X_t( x ) )
  \Big] dt
  +
  b( x, X_t( x ) ) \, dW_t( x )
\end{equation}
with 
$ 
  X_{ t } 
  |_{ \partial( 0,1 )^d } 
  \equiv 0 
$
and $ X_0( x ) = x_0( x ) $ for $ t \in [0,T] $ and $ x \in (0,1)^d $.
Moreover, we define a family 
$ \beta^j \colon 
[0,T] \times \Omega 
\rightarrow \mathbb{R} $, $ j \in \{ k \in \mathcal{J} \, \big| \,
\mu_k \neq 0 \} $, of independent standard Brownian motions by
\[
  \beta_t^j( \omega )
  :=
  \frac{ 1 }{ \sqrt{ \mu_j } }
  \left< g_j, W_t( \omega ) \right>_U
\]
for all $ \omega \in \Omega $, $ t \in [0,T] $ and all
$ j \in \mathcal{J} $ with $ \mu_j \neq 0 $. Using this notation, 
the SPDE \eqref{eq:SPDE_reduced} 
can be written as
\begin{equation}
\label{eq:SPDE19}
  d X_t( x )
  =
  \Big[
    \kappa
    \Delta
    X_t( x )
    +
    f( x, X_t( x ) )
  \Big] dt
  +
  \sum_{ 
    \substack{ j \in \mathcal{J} \\ 
    \mu_j \neq 0 } 
  }
  \Big[
    \sqrt{ \mu_j } \,
    b( x, X_t( x ) ) \,
    g_j( x )
  \Big] d\beta_t^j
\end{equation}
with 
$ 
  X_{ t } 
  |_{ \partial( 0,1 )^d } 
  \equiv 0 
$
and $ X_0( x ) = x_0( x ) $ for $ t \in [0,T] $ and $ x \in (0,1)^d $.
Finally, due to \eqref{eq:check6a}, 
Theorem~\ref{thm:existence} 
shows the existence of an 
up to modifications unique 
predictable stochastic process 
$ 
  X \colon [0,T] \times 
  \Omega \rightarrow V_{ \gamma } 
$
fulfilling \eqref{eq:SPDE19} for any $ \gamma \in [ 0, \frac{ \min( 3, 2\delta + 2 ) }{ 4 } ) $. 

At this point we would like to 
thank an anonymous referee
for pointing out to us 
%his instructive observation 
that Theorem~\ref{thm:existence}
can be generalized to SPDEs 
on UMD Banach spaces with 
type $ 2 $ by exploiting
the results in 
van Neerven 
et al.~\cite{vvw07}.
In such a Banach space framework
the state space 
$ L^q( (0,1)^2, \mathbb{R} ) $
with possibly large $ q \in [2,\infty) $
can be considered instead of the
Hilbert space
$
  H = L^2 ( (0,1)^d, \mathbb{R} )
$.
By using appropriate
Sobolev embeddings we then
expect that one can even show
that the solution process
of the SPDE~\eqref{eq:SPDE19}
enjoys values in the space
$ 
  C^{ 2 \gamma 
  }( (0,1)^d, \mathbb{R} )
$
of 
continuous differentiable functions
from $ (0,1)^d $ to $ \mathbb{R} $
with
$ (2 \gamma - 1) $-H\"{o}lder 
continuous derivatives
for any 
$ 
  \gamma \in \big( \frac{1}{2}, 
    \frac{
      \min( 3 , 2 \delta + 2 )
    }{ 4 }
  \big) 
$.
The precise
regularity study
of the SPDE~\eqref{eq:SPDE19}
in such a Banach space framework 
instead of the Hilbert space
framework considered here
remains an open question 
for future research.

In the next step 
we illustrate Theorem \ref{thm:existence} 
using \eqref{eq:Bbound2} and
\eqref{eq:check6a} in the following three more concrete examples.
\subsection{ A one dimensional stochastic 
reaction diffusion equation}\label{sec:example1}
Consider the situation
described above in the case $d=1$.
In this subsection we want to 
give a concrete example for
$ \left( g_j \right)_{
  j \in \mathcal{J}
} $
and
$ \left( \mu_j \right)_{
  j \in \mathcal{J}
} $
so that \eqref{ass_eigenf}
is fulfilled and
all above applies.
Let
$ \mathcal{J} = \{ 0,1,2,\ldots \} $, let $ g_0( x ) = 1 $ and let $ g_j( x ) = \sqrt{2} \cos( j \pi x ) $ 
for all $ x \in (0,1) $ and 
all $ j \in \mathbb{N} $. 
Moreover, let 
$ \rho \in (1,\infty) $ and
$ \nu \in (0,\infty) $ be given 
real numbers, 
let $ \mu_0 = 0 $ and 
let $ \mu_j = \frac{ \nu }{ j^\rho } $ 
for all $ j \in \mathbb{N} $. This choice ensures that
\eqref{ass_eigenf} 
is fulfilled. 
Indeed, we have
\begin{align*}
&
  \sum_{ j \in \mathcal{J} }
  \mu_j \left\| g_j 
  \right\|^2_{ C^{ \delta }( (0,1)^d, \mathbb{R} ) }
\\ & =
  \sum_{ j = 1 }^{ \infty }
  \frac{ \nu }{ j^{ \rho } }
  \left\| g_j 
  \right\|^2_{ C^{ \delta }( (0,1)^d, \mathbb{R} ) }
\\&=
  \sum_{ j = 1 }^{ \infty }
  \frac{ 2 \nu }{ j^{ \rho } }
  \left(
    1 + \sup_{ \substack{ x,y \in (0,1) \\ x \neq y } }
    \frac{ \left| \cos( j \pi x ) - \cos( j \pi y ) \right| 
    }{ \left| x - y \right|^{ \delta } }
  \right)^{2}
\\&\leq 
  \sum_{ j = 1 }^{ \infty }
  \frac{ 2 \nu }{ j^{ \rho } }
  \left(
    1 + 
    \sup_{ 
      \substack{ 
          x,y \in (0,1) 
        \\ 
          x \neq y 
      } 
    }
    \frac{ 2^{ (1-\delta) } \left| \cos( j \pi x ) - 
      \cos( j \pi y ) \right|^{ \delta } 
    }{ \left| x - y \right|^{ \delta } }
  \right)^{2}
\end{align*}
and hence
\begin{equation}
\label{eq:gj_estimate}
\begin{split}
\lefteqn{
  \sum_{ j \in \mathcal{J} }
  \mu_j \left\| g_j 
  \right\|^2_{ 
    C^{ \delta }( (0,1)^d, \mathbb{R} 
    ) 
  }
\leq 
  \sum_{ j = 1 }^{ \infty }
  \frac{ 2 \nu }{ j^{ \rho } }
  \left(
    1 + 2^{ (1 - \delta) } ( j \pi )^{ \delta }
  \right)^{ \! 2 }
}
\\ & \leq
  \sum_{ j = 1 }^{ \infty }
  \frac{ 2 \nu }{ j^{ \rho } }
  \Big(
    1 + \pi \, j^{ \delta }
  \Big)^{ \! 2 }
  \leq 
  8 \nu \pi^2
  \left( 
    \sum_{ j = 1 }^{ \infty } 
    j^{ (2\delta - \rho) }
  \right)
  < \infty
\end{split}
\end{equation}
for all 
$ 
  \delta \in 
  ( 0, \frac{ \rho - 1 }{ 2 } ) 
$. 
Assumption \ref{diffusion} 
is thus fulfilled for 
every 
$ \alpha \in (0,\min(\frac{1}{4}, 
\frac{ \rho - 1 }{4} ) ) $ 
$= ( 0, \frac{ \min( 1, \rho - 1 ) }{ 4 } ) $ 
(see \eqref{eq:check6a}).
Here the SPDE \eqref{eq:SPDE19} 
reduces to
\begin{equation}\label{eq:SPDE21}
  d X_t( x )
  = \!\!
  \left[
    \kappa
    \frac{ \partial^2 }{ \partial x^2 }
    X_t( x )
    +
    f( x, X_t( x ) )
  \right] \! dt
  + \!
  \sum_{ j = 1 }^{ \infty } \!
  \left[
    \frac{ \sqrt{ 2 \nu } }{ 
      j^{ \frac{ \rho }{ 2 } } 
    } \,
    b( x, X_t( x ) ) \,
    \cos( j \pi x ) 
  \right] \! d\beta_t^j
\end{equation}
with $ X_t( 0 ) = X_t( 1 ) = 0 $ and $ X_0( x ) = x_0( x ) $ for $ t \in [0,T] $ and $ x \in (0,1) $. Theorem \ref{thm:existence}
finally yields the existence 
of an up to modifications 
unique stochastic process 
$ X \colon [0,T] \times \Omega 
\rightarrow V_{ \gamma } $ 
fulfilling \eqref{eq:SPDE21} 
for any 
$ 
  \gamma \in 
  [0, \frac{ \min( 3, \rho + 1 ) }{ 4 } ) 
$. 
Under further assumptions on 
$ 
  b \colon (0,1) \times \mathbb{R} 
  \rightarrow \mathbb{R} 
$, the solution 
of \eqref{eq:SPDE21}
enjoys even more regularity 
which is demonstrated 
in the following subsection.
\subsection{More regularity for a one dimensional stochastic reaction diffusion equation}\label{sec:example2}
Consider the situation
of 
Subsection~\ref{sec:example1}
with $ \rho = 3 $.
% As before let $ d = 1 $, let $ \mathcal{J} = \{0,1,2,\ldots\} $, let
% $ g_0( x ) = 1 $ and let $ g_j( x ) = \sqrt{ 2 } \cos( j \pi x ) $ for
% all $ x \in (0,1) $ and all $ j \in \mathbb{N} $ in this subsection.
% Additionally, let $ \nu \in (0,\infty) $ be a given real number, let
% $ \mu_0 = 0 $ and let $ \mu_j = \frac{ \nu }{ j^3 } $ for all $ j \in \mathbb{N} $. 
Hence, \eqref{eq:gj_estimate} 
shows that
\eqref{ass_eigenf} holds for all $ \delta \in (0,1) $. Therefore,
\eqref{eq:check6a} gives that Assumption \ref{diffusion} is fulfilled
for all $ \alpha \in [0,\frac{1}{4} ) $. However, we now additionally
assume that the diffusion coefficient 
$ b\colon (0,1) \times \mathbb{R} \rightarrow \mathbb{R} $ respects the Dirichlet boundary conditions in
\eqref{eq:SPDE19}, 
i.e., 
we assume that
\begin{equation}
\label{eq:addi_assumpt}
  \lim_{ x \searrow 0 }
  b( x, x ) 
  =
  \lim_{ x \nearrow 1 }
  b( x, x - 1 )
  =
  0
\end{equation}
holds. 
Under this additional 
assumption more regularity 
for the solution process 
of \eqref{eq:SPDE19} can 
be established. 
More precisely, 
\eqref{eq:addi_assumpt},
\eqref{eq:vr2} and 
\eqref{eq:Bbound2} 
yield that 
$
  B(v) u \in V_{ r }
$
and that
\begin{equation}
  \left\| B(v) u \right\|_{ V_{ r } }
\leq
  \left(
    \frac{
      q \left| C_r \right|^2
      \left( 3 d \right)^{  
        2 d
      }
    }{ 
      \left( 
        \delta - 2 r 
      \right)^{ 2 } 
    }
  \right)
  \left(
    \sum_{ j \in \mathcal{J} }
    \mu_j
    \left\| 
      g_j
    \right\|_{ 
      C^{ \delta }( (0,1)^d, \mathbb{R} ) 
    }^2
  \right)^{ \! \frac{ 1 }{ 2 } }
  \left(
    1 + \left\| v \right\|_{ V_r }
  \right)
  \left\| u \right\|_{ U_0 }
\end{equation}
for all $ v \in V_{ r } $,
$ r \in (\frac{1}{4}, \frac{ 1 }{ 2 } ) $
and all $ u \in U_0 $.
This implies that
$ B(v) \in L(U_0, V_{ r } ) $
for all 
$ v \in V_{ r } $ and
all
$ r \in (\frac{1}{4}, \frac{ 1 }{ 2 } ) $.
Hence, 
\eqref{eq:real_constant}
and
\eqref{eq:boundHS}
give
\begin{equation}
\begin{split}
\lefteqn{
  \sum_{ j \in \mathcal{J} }
  \left(
    \mu_j
    \left\| 
      B(v) g_j
    \right\|_{
      V_{ r }
    }^2  
  \right)
}
\\ & \leq
  \left(
    \frac{
      q \left| C_r \right|^2
      \left( 3 d \right)^{  
        2 d
      }
    }{ 
      \left( 
        \delta - 2 r 
      \right)^{ 2 } 
    }
  \right)^{ \! 2 }
  \left(
    \sum_{ j \in \mathcal{J} }
    \mu_j
    \left\| 
      g_j
    \right\|_{ 
      C^{ \delta }( (0,1)^d, \mathbb{R} ) 
    }^2
  \right)
  \left(
    1 + \left\| v \right\|_{ V_r }
  \right)^2
< \infty
\end{split}
\end{equation}
for all $ v \in V_r $
and all
$ r \in 
(\frac{1}{4}, \frac{ 1 }{ 2 } ) 
$.
Thus, Assumption~\ref{diffusion}
is here even fulfilled for all 
$ \alpha \in [0, \frac{1}{2} ) $.
Theorem \ref{thm:existence} finally shows that, 
under 
condition \eqref{eq:addi_assumpt}, 
the SPDE
\begin{equation*}
  d X_t( x )
  = \!\!
  \left[
    \kappa
    \frac{ \partial^2 }{ \partial x^2 }
    X_t( x )
    +
    f( x, X_t( x ) )
  \right] \! dt
  + \!
  \sum_{ j=1 }^{ \infty } \!
  \left[
    \frac{ \sqrt{ 2 \nu } }{ j^{ \frac{3}{2} } } \,
    b( x, X_t( x ) ) \,
    \cos( j \pi x )
  \right] \! d\beta_t^j
\end{equation*}
with $ X_t( 0 ) = X_t( 1 ) = 0 $ and $ X_0( x ) = x_0( x ) $
for $ t \in [0,T] $ and $ x \in ( 0,1 ) $ admits an up to
modifications unique predictable 
solution 
process 
$ 
  X \colon 
  [0,T] \times \Omega \rightarrow 
  V_{ \gamma } 
$ 
for any 
$ \gamma \in [0,1) $.
\subsection{Stochastic reaction diffusion equations with commutative noise}
Consider the situation
before 
Subsection~\ref{sec:example1}
and assume that 
the eigenfunctions of the linear 
operator 
$
  A \colon D( A ) 
  \subset H \rightarrow H 
$ 
and of the
covariance operator 
$ 
  Q \colon 
  U = H \rightarrow H 
$ 
coincide.
More formally, 
let 
$ 
  \mathcal{J} = 
  \mathcal{I} = 
  \mathbb{N}^d 
$,
let 
$ g_j = e_j $ 
for all $ j \in \mathcal{J} $, 
let $ \rho \in (d, d + 2) $ 
and 
$ \nu \in (0,\infty) $ 
be given real numbers 
and let 
$ 
  \mu_j = \nu
  \left( 
    j_1 + \ldots + j_d 
  \right)^{ - \rho } 
$ 
for all 
$ j \in ( j_1, \ldots, j_d ) \in 
\mathcal{J} = \mathbb{N}^d $. 
We now check condition
\eqref{ass_eigenf}. 
To this end note that
\begin{align*}
&
  \left\|
    g_j'( x )
  \right\|_{ 
    L( \mathbb{R}^d, \mathbb{R} ) 
  }
=
  \sup_{ \substack{ v \in \mathbb{R}^d \\ \left\| v \right\|_{ \mathbb{R}^d } \leq 1 } }
  \left| g_j'( x ) v \right|
  \leq 
  \sup_{ 
    \substack{ 
      v \in \mathbb{R}^d 
    \\ 
      \left\| v \right\|_{ \mathbb{R}^d } 
      \leq 1 
    } 
  }
  \left(
    \sum_{ k = 1 }^{ d }
    \left|
      \left( 
        \frac{ \partial g_j
        }{ \partial x_k } 
      \right) \! ( x )
    \right|
    \cdot 
    \left| v_k \right|
  \right)
\\ & \leq
  \left(
    \sum_{ k = 1 }^{ d }
    \left|
      \left( 
        \frac{ \partial g_j
        }{ \partial x_k }  
      \right) \! ( x )
    \right|^2
  \right)^{ \! \frac{1}{2} }
  \leq
  \left(
    \sum_{ k = 1 }^{ d }
    \pi^2 
    \left| j_k \right|^2
    2^d
  \right)^{ \! \frac{1}{2} }
=
  2^{ \frac{d}{2} } \pi
  \left(
    \sum_{ k = 1 }^{ d }
    \left| j_k \right|^2
  \right)^{ \! \frac{1}{2} }
\end{align*}
holds for all $ x \in (0,1)^d $ and all 
$ j \in ( j_1, \ldots, j_d ) \in \mathcal{J} $.
This implies
\begin{equation}
\label{eq:g_j_difference}
\begin{split}
  \left|
    g_j( x ) - g_j( y )
  \right|
 &\leq
  \int_0^1
  \left|
    g_j'( x + r( y - x ) )( y - x )
  \right| dr
\\&\leq 
  2^{ \frac{d}{2} } \pi 
  \left(
    \sum_{ k = 1 }^{ d }
    \left| j_k \right|^2
  \right)^{ \! \frac{1}{2} }
  \left\| x - y \right\|_{ \mathbb{R}^d }
\end{split}
\end{equation}
for all $ x, y \in (0,1)^d $ 
and all $ j \in \mathcal{J} $.
Hence, we obtain
\begin{align*}
  \left\| 
    g_j 
  \right\|_{ C^{ \delta }( (0,1)^d, \mathbb{R} ) }
 &\leq 
  \left\| 
    g_j 
  \right\|_{ C( (0,1)^d, \mathbb{R} ) }
  +
  \sup_{ \substack{x,y \in (0,1)^d \\ x \neq y }}
  \frac{ 
    \left| g_j( x ) - g_j( y ) \right|
  }{
    \left\| x - y \right\|^{ \delta }_{ \mathbb{R}^d }
  }
\\&\leq 
  2^{ \frac{d}{2} }
  +
  \sup_{ \substack{x,y \in (0,1)^d \\ x \neq y }}
  \frac{ 
    ( 
      2 \cdot 2^{ \frac{ d }{ 2 } } 
    )^{ ( 1 - \delta ) }
    \left| g_j( x ) - g_j( y ) \right|^{ \delta }
  }{
    \left\| x - y \right\|^{ \delta }_{ \mathbb{R}^d }
  }
\end{align*}
and
\begin{equation}
\label{eq:g_j_norm}
\begin{split}
  \left\| 
    g_j 
  \right\|_{ C^{ \delta }( (0,1)^d, \mathbb{R} ) }
 & \leq 
  2^{ \frac{d}{2} }
  +
  2^{ ( \frac{d}{2} + 1 )( 1 - \delta ) }
  \left(
    2^{ \frac{d}{2} } \pi
    \left(
      \sum_{ k = 1 }^{ d }
      \left| j_k \right|^2
    \right)^{ \! \frac{1}{2} \; }
  \right)^{ \! \delta }
\\ & \leq
  2^{ \frac{d}{2} }
  +
  2^{ \frac{d}{2} } \pi
  \left(
    \sum_{ k = 1 }^{ d }
    \left| j_k \right|^2
  \right)^{ \! \frac{\delta}{2} }
  \leq 
  2^{ ( \frac{d}{2} + 1 ) }
  \,
  \pi
  \left(
    \sum_{ k = 1 }^{ d }
    \left| j_k \right|^2
  \right)^{ \! \frac{\delta}{2} }
\end{split}
\end{equation}
for all $ \delta \in (0,1] $ 
and all $ j \in \mathcal{J} $.
Therefore, we get
\begin{align*}
  \sum_{ j \in \mathcal{J} }
  \mu_j
  \left\| 
    g_j 
  \right\|^2_{ 
    C^{ \delta }( 
      (0,1)^d, \mathbb{R} 
    ) 
  }
& \leq 
  \sum_{ j \in \mathbb{N}^d }
  \nu 
  \left( j_1 + \ldots + j_d 
  \right)^{ - \rho }
  2^{ \left( d + 2 \right) } 
  \,
  \pi^2
  \left(
    \sum_{ k = 1 }^{ d }
    \left| j_k \right|^2
  \right)^{ \! \delta }
\\&=
  \nu \,
  2^{ \left( d + 2 \right) } 
  \,
  \pi^2
  \left(
    \sum_{ j \in \mathbb{N}^d }
    \frac{
      \left( 
        \left| j_1 \right|^2 
        + \ldots 
        + 
        \left| j_d \right|^2 
      \right)^{ \! \delta } 
    }{ 
      \left( 
        j_1 + \ldots + j_d 
      \right)^{ \delta } 
    }
  \right)
  < \infty
\end{align*}
for all $ \delta \in (0, \frac{ \rho - d }{ 2 } ) $ 
and hence,
\eqref{ass_eigenf} holds for all 
$ \delta \in (0, \frac{ \rho - d }{ 2 } ) $.
Furthermore, 
since 
$ 
  g_j|_{
    \partial (0,1)^d
  } 
  = 
  e_j|_{
    \partial (0,1)^d
  }
  =
  0
$ 
for all $ j \in \mathcal{J} $ 
here, 
\eqref{eq:vr1},
\eqref{eq:vr2} and 
\eqref{eq:Bbound2} 
yield that 
$
  B(v) u \in V_{ r }
$
and that
\begin{equation}
  \left\| B(v) u \right\|_{ V_{ r } }
\leq
  \left(
    \frac{
      q \left| C_r \right|^2
      \left( 3 d \right)^{  
        2 d
      }
    }{ 
      \left( 
        \delta - 2 r 
      \right)^{ 2 } 
    }
  \right)
  \left(
    \sum_{ j \in \mathcal{J} }
    \mu_j
    \left\| 
      g_j
    \right\|_{ 
      C^{ \delta }( (0,1)^d, \mathbb{R} ) 
    }^2
  \right)^{ \! \frac{ 1 }{ 2 } }
  \left(
    1 + \left\| v \right\|_{ V_r }
  \right)
  \left\| u \right\|_{ U_0 }
\end{equation}
for all $ v \in V_{ r } $,
$ 
  r \in ( 0, \frac{ \rho - d }{ 4 } )
  \backslash \{ \frac{ 1 }{ 4 } \} 
$
and all $ u \in U_0 $.
This implies that
$ B(v) \in L(U_0, V_{ r } ) $
for all 
$ v \in V_{ r } $ and
all
$ 
  r \in ( 0, \frac{ \rho - d }{ 4 } ) 
  \backslash \{ \frac{ 1 }{ 4 } \} 
$.
Hence, 
\eqref{eq:real_constant}
and
\eqref{eq:boundHS}
give
\begin{equation}
\begin{split}
\lefteqn{
  \sum_{ j \in \mathcal{J} }
  \left(
    \mu_j
    \left\| 
      B(v) g_j
    \right\|_{
      V_{ r }
    }^2  
  \right)
}
\\ & \leq
  \left(
    \frac{
      q \left| C_r \right|^2
      \left( 3 d \right)^{  
        2 d
      }
    }{ 
      \left( 
        \delta - 2 r 
      \right)^{ 2 } 
    }
  \right)^{ \! 2 }
  \left(
    \sum_{ j \in \mathcal{J} }
    \mu_j
    \left\| 
      g_j
    \right\|_{ 
      C^{ \delta }( (0,1)^d, \mathbb{R} ) 
    }^2
  \right)
  \left(
    1 + \left\| v \right\|_{ V_r }
  \right)^2
< \infty
\end{split}
\end{equation}
for all $ v \in V_r $
and all
$ 
  r \in ( 0, \frac{ \rho - d }{ 4 } ) 
  \backslash \{ \frac{ 1 }{ 4 } \} 
$.
Assumption~\ref{diffusion} 
is thus fulfilled for all
$ 
  \alpha \in [0,\frac{\rho-d}{4})
  \backslash \{ \frac{ 1 }{ 4 } \} 
$ here. 
Theorem~\ref{thm:existence}
therefore yields that 
the SPDE
\begin{multline}\label{eq:SPDE_Z}
  d X_t( x )
  =
  \Big[
    \kappa
    \Delta
    X_t( x )
    +
    f( x, X_t( x ) )
  \Big] dt
\\
  +
  \sum_{ j \in \mathbb{N}^d }
  \left[
    \frac{ 
      \sqrt{ \nu 2^d } 
      \sin( j_1 \pi x_1 ) \cdot \ldots \cdot
      \sin( j_d \pi x_d ) 
    }{ 
      \left( 
        j_1 + \ldots + j_d
      \right)^{ 
        \frac{\rho}{2} } 
      } \,
    b( x, X_t( x ) )
  \right] \! d\beta_t^j
\end{multline}
with 
$ 
  X_{ t } |_{ \partial( 0,1 )^d } 
  \equiv 0 
$ 
and $ X_0( x ) = x_0( x ) $ 
for all $ t \in [0,T] $ and $ x \in (0,1)^d $ enjoys an
up to modifications unique 
predictable solution process
$ 
  X \colon [0,T] \times \Omega 
  \rightarrow V_{ \gamma } 
$
fulfilling \eqref{eq:SPDE_Z} for any
$ \gamma \in [0, \frac{ \rho-d + 2 }{ 4 } ) $.
\section{Proof of 
Theorem \ref{thm:existence}}\label{secproofs}
Throughout this section the notation
$$
  \left\|
    Z
  \right\|_{L^p (\Omega; E)}
  :=
  \Big(
    \mathbb{E}\big[
      \left\|
        Z
      \right\|_E^p
    \big]
  \Big)^{ \! \frac{1}{p} }
  \in [0,\infty]
$$
is used for an 
$ \mathbb{R} $-Banach
space 
$ \left( E, \left\| \cdot \right\|_E \right) $
and an 
$ \mathcal{F} $/$ \mathcal{B}(E) $-measurable
mapping 
$ Z \colon \Omega \rightarrow E $.
The real number $ p \in [2,\infty) $ is as
given in Assumption \ref{initial}.
Next a 
well known estimate
for analytic semigroups
is presented
(see, e.g.,
Lemma~11.36 in 
Renardy and Roggers~\cite{rr93})
%and also 
%Theorem~37.5 in \cite{sy02}).
%
%
%
%
\begin{lemma}\label{lemmaRef4}
Assume that the setting
in Section~\ref{sec:setting} is fulfilled. 
Then there exist real numbers
$ c_r \in [1,\infty ) $,
$ r \in [0,1] $, such that
\begin{equation}
  \left\| 
    \left( t \left( \eta - A \right) \right)^r
    e^{ A t }
  \right\|_{ L(H) } 
  \leq c_r
\end{equation}
and
\begin{equation}
  \left\|
    \left( 
      t \left( \eta - A \right) 
    \right)^{ - r }
    \left( e^{ A t } - I 
    \right)
  \right\|_{ L ( H ) }
\leq
  c_r
\end{equation}
for all $ t \in (0,T] $
and all $ r \in [0,1] $.  
\end{lemma}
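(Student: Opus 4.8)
The plan is to reduce both estimates to the standard spectral-calculus facts for a strongly continuous analytic semigroup. By Assumption~\ref{semigroup}, $A$ generates a strongly continuous analytic semigroup, and by the choice of $\eta$ the shifted operator $\eta-A$ is sectorial with spectrum in a half-plane $\{\operatorname{Re}(\lambda)>0\}$, so that $-(\eta-A)$ generates the analytic semigroup $e^{-(\eta-A)t}=e^{-\eta t}e^{At}$, $t\in[0,\infty)$. Fractional powers $(\eta-A)^r$ for $r\in[0,1]$ are then well defined as densely defined closed operators (see, e.g., Subsection~11.4.2 in Renardy and Rogers~\cite{rr93}), and the relevant mapping properties of $e^{At}$ are inherited from those of $e^{-(\eta-A)t}$ up to the bounded exponential factor $e^{-\eta t}$, which for $t\in(0,T]$ contributes only a constant depending on $\eta$ and $T$.

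For the first estimate I would invoke the well-known smoothing bound for analytic semigroups: there is a constant $M_r\in[0,\infty)$ with $\|(\eta-A)^r e^{-(\eta-A)t}\|_{L(H)}\le M_r\,t^{-r}$ for all $t\in(0,\infty)$ and all $r\in[0,1]$ (Lemma~11.36 in~\cite{rr93}; this follows from the Cauchy integral representation of $(\eta-A)^r e^{-(\eta-A)t}$ over a sectorial contour together with the resolvent estimate $\|(\lambda-A)^{-1}\|_{L(H)}\le C/|\lambda-\eta|$). Multiplying by $t^r$ and reinserting the factor $e^{\eta t}\le e^{\eta T}$ gives $\|(t(\eta-A))^r e^{At}\|_{L(H)}\le M_r e^{\eta T}=:c_r$ for $t\in(0,T]$; enlarging $c_r$ if necessary we may assume $c_r\ge1$, and since $r\mapsto M_r$ can be chosen bounded on $[0,1]$ (it is, e.g., continuous there), the family $(c_r)_{r\in[0,1]}$ is as required.

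For the second estimate the key identity is the functional-calculus representation
\[
  \left(e^{-(\eta-A)t}-I\right)v
  =
  -\int_0^t (\eta-A)\,e^{-(\eta-A)s}v\,ds
  =
  -\int_0^t (\eta-A)^{1-r}e^{-(\eta-A)s}\,(\eta-A)^r v\,ds
\]
valid for $v\in D((\eta-A)^r)$; applying the first estimate to the integrand with exponent $1-r\in[0,1]$ yields $\|(e^{-(\eta-A)t}-I)v\|_H\le M_{1-r}\big(\int_0^t s^{r-1}\,ds\big)\|(\eta-A)^r v\|_H = (M_{1-r}/r)\,t^r\,\|(\eta-A)^r v\|_H$. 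Rewriting this as $\|(t(\eta-A))^{-r}(e^{-(\eta-A)t}-I)\|_{L(H)}\le M_{1-r}/r$ and again absorbing the factor $e^{\eta t}\le e^{\eta T}$ (which now also appears, together with $e^{\eta t}-1$ terms, when one passes from $e^{-(\eta-A)t}$ back to $e^{At}$; these extra terms are controlled by $\|(t(\eta-A))^{-r}\|_{L(H)}\,|e^{\eta t}-1|\lesssim t^{-r}\cdot t$, which is bounded on $(0,T]$ for $r\le1$) gives a bound of the asserted form. Taking the maximum of the two constants over the two estimates and over $r\in[0,1]$ produces a single family $(c_r)_{r\in[0,1]}\subset[1,\infty)$ that works for both inequalities.

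The only mild subtlety I anticipate is bookkeeping the shift between $e^{At}$ and $e^{-(\eta-A)t}$ cleanly — in particular making sure that the $r=1$ endpoint of the second estimate (where $(t(\eta-A))^{-1}(e^{At}-I)$ must stay bounded, not just $O(t^{1-r})$) is handled, which it is because there the integral $\int_0^t s^0\,ds=t$ exactly cancels the $t^{-1}$. Everything else is a direct citation of the analytic-semigroup estimates in~\cite{rr93}, so there is no real obstacle beyond organizing constants.
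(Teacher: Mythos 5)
Your proposal is correct and is essentially the paper's approach: the paper offers no proof of this lemma beyond citing it as a well-known analytic-semigroup estimate (Lemma~11.36 in Renardy and Rogers), and your argument is precisely the standard derivation behind that citation, including the correct bookkeeping of the shift between $e^{At}$ and $e^{-(\eta-A)t}$ and the $r=1$ endpoint. The only cosmetic point is that your constant $M_{1-r}/r$ in the second estimate degenerates as $r\to 0$; at $r=0$ the claim is anyway trivial since $\left(t\left(\eta-A\right)\right)^{0}=I$ and $\left\|e^{At}-I\right\|_{L(H)}$ is bounded on $[0,T]$, and for $r>0$ the lemma only requires each $c_r$ to be finite, not uniformly bounded in $r$.
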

Moreover, we would like to note
the following remark.
\begin{remark}\label{rem:analytic}
Assume that the setting
in Section~\ref{sec:setting} 
is fulfilled and let 
$ Y \colon [0,T] \times \Omega
\rightarrow HS(U_0, H) $ be
a predictable stochastic process.
Then we obtain
$ 
  e^{ A t } \, Y_s(\omega) 
  \in 
  \cap_{ u \in [0,\infty) }
  \, V_u 
$ 
for all 
$ \omega \in \Omega,
s \in [0,T] $ and all $ t \in (0,T] $
since the semigroup 
$ e^{ A t } \in L(H) $,
$ t \in [0,\infty) $,
is analytic
(see Assumption~\ref{semigroup}).
In particular, if
$ 
  \int^t_0
  \mathbb{E}\big[ 
    \| 
      e^{ A (t-s) }
      \, Y_s 
    \|_{ HS(U_0, V_r) }^2
  \big] \, ds 
  < \infty 
$ 
for all $ t \in [0,T] $
and some $ r \in [0,\infty) $,
then the stochastic process
$ \int_0^t e^{ A (t-s) } \, Y_s \, dW_s $,
$ t \in [0,T] $, has a
$ V_r $-valued adapted
modification.
\end{remark}
Using Lemma~\ref{lemmaRef4}
and Remark~\ref{rem:analytic}
we now present the
proof of Theorem~\ref{thm:existence}.
\begin{proof}[Proof of 
Theorem~\ref{thm:existence}]
The real number
$ R \in (0,\infty) $ 
given by
\begin{equation}
\begin{split}
  R 
  & := 
  1
+
  \big\| 
    ( \eta - A )^{ -1 } 
  \big\|_{ L( H ) }
+
  \left\| F(0) \right\|_{ H }
+
  \sup_{ \substack{v,w \in H \\ v \neq w } }
  \left(
  \frac{ 
    \left\| F( v ) - F( w ) \right\|_H 
  }{
    \left\| v - w \right\|_H 
  }
  \right) 
\\ & \qquad +
  \left\| B(0) \right\|_{ HS( U_0, H ) }
+
  \sup_{ \substack{v,w \in H \\ v \neq w } }
  \left(
  \frac{ \left\| B( v ) - B( w ) \right\|_{ HS( U_0, H ) }}{
    \left\| v - w \right\|_H } 
  \right)
\end{split}
\end{equation}
is used throughout this proof. Due to Assumptions \ref{semigroup}-\ref{diffusion} the number $ R $ is indeed finite.
Moreover, let $ \mathcal{V}_{r} $ 
for $r \in [0,\infty) $ be 
the $ \mathbb{R} $-vector space of equivalence 
classes of $ V_{r} $-valued predictable 
stochastic processes 
$ 
  Y \colon 
  [0,T] \times \Omega 
  \rightarrow V_{r} 
$ 
that satisfy
\begin{equation}\label{eqn:predictable}
  \sup_{ t \in [0,T] } 
  \mathbb{E}\big[
    \left\| Y_t 
    \right\|_{ V_r }^p  
  \big] < \infty
\end{equation} 
where two stochastic processes lie 
in one equivalence class if and only if 
they are modifications of each other. 
As usual we do not distinguish between a predictable stochastic process 
$ 
  Y \colon [0,T] \times 
  \Omega 
  \rightarrow V_{r} 
$ 
satisfying \eqref{eqn:predictable} 
and its equivalence class in 
$ \mathcal{V}_{r} $ for 
$ r \in [0,\infty) $. 
Then we equip these spaces 
with the norms 
$$
  \left\| Y \right\|_{ \mathcal{V}_{r} , u } 
  := 
  \sup_{ t \in [0,T] } 
  \left( 
    e^{ u t } 
    \left\| Y_t 
    \right\|_{ L^p ( \Omega; V_{r} ) } 
  \right) 
$$ 
for all 
$ Y \in \mathcal{V}_{r} $, 
$ u \in \mathbb{R} $ 
and all $ r \in [0,\infty) $. 
Note that 
the pair 
$ 
  \big( 
    \mathcal{V}_{r}, 
    \left\| \cdot 
    \right\|_{  \mathcal{V}_{r} , u }   
    \!
  \big) 
$ 
is an 
$ \mathbb{R} $-Banach 
space for every 
$ u \in \mathbb{R} $ and 
every $ r \in [0,\infty) $. 
In the next step we 
consider the mapping 
$ 
  \Phi \colon \mathcal{V}_{\alpha} 
  \rightarrow \mathcal{V}_{\alpha} 
$ 
given by
\begin{equation}
\label{eqn:phi}
  \left( \Phi  Y  \right)_{ t }
  :=
  e^{ At } \xi
  +
  \int_0^t
  e^{ A(t-s) }
  F\!\left( Y_s \right) ds
  +
  \int_0^t
  e^{ A(t-s) }
  B\!\left( Y_s \right) dW_s 
\end{equation}
$ \mathbb{P} $-a.s.\ for 
all $ t \in [0,T] $ 
and all 
$ Y \in \mathcal{V}_{\alpha} $.
In the following we show that 
$ 
  \Phi \colon 
  \mathcal{V}_{\alpha} 
  \rightarrow \mathcal{V}_{\alpha} 
$ 
given by \eqref{eqn:phi} 
is well defined.

To this end note 
that Assumptions \ref{semigroup} 
and \ref{initial} 
yield that 
$ 
  e^{ A t } \xi 
$,
$
  t \in [0,T]
$,
is an 
adapted $ V_{\gamma} $-valued 
stochastic process with 
continuous sample paths. 
Hence, 
$ 
  e^{ A t } \xi 
$,
$
  t \in [0,T]
$,
is a $ V_{\gamma} \subset V_{\alpha} $-valued 
predictable stochastic process 
(see Proposition 3.6 (ii) 
in \cite{dz92}). 
Additionally, we have
\begin{equation}
\label{eq:welldefined1}
\begin{split}
  \sup_{ t \in [0,T] }
  \mathbb{E}\Big[
    \left\| e^{ A t } \xi \right\|^p_{ 
      V_{\gamma} 
    }
  \Big]
& \leq 
  \left(
    \sup_{ t \in [0,T] }
    \left\| e^{ A t }
    \right\|^p_{ L( H ) }
  \right)
  \mathbb{E}\big[
    \| \xi 
    \|^p_{ V_{ \gamma } }
  \big]
\\ & \leq
  \left| c_0 \right|^p
  \mathbb{E}\big[
    \| \xi 
    \|^p_{ V_{ \gamma } }
  \big]
  < \infty ,
\end{split}
\end{equation}
which shows that 
$ 
  e^{ A t } \xi 
$,
$
  t \in [0,T]
$,
is indeed in 
$ 
  \mathcal{V}_{\gamma} 
  \subset \mathcal{V}_{\alpha} 
$.

We now concentrate
on the second summand
on the right hand side 
of \eqref{eqn:phi}.
First 
observe that the mapping 
$ F|_{ V_{ \alpha } }
\colon V_{ \alpha } \rightarrow H $
given by
$ F|_{ V_{ \alpha } }(v) = F(v) $
for all $ v \in V_{ \alpha } $
is  
$ 
  \mathcal{B}\left( V_{ \alpha } \right) 
$/$ 
  \mathcal{B}\left( H \right) 
$-measurable. Indeed, 
the Kuratowski theorem gives
$ V_{ \alpha } \in \mathcal{B}(H) $
and
$ \mathcal{B}( V_{ \alpha } )
= \mathcal{B}(H) \cap V_{ \alpha } $
which in turn implies the
asserted Borel measurability
of $ F|_{ V_{ \alpha } } $.
Next
Lemma~\ref{lemmaRef4}
and
Jensen's inequality
yield
\begin{align*}
&
  \int_0^t 
  \mathbb{E}\Big[
    \big\|
      e^{ A( t -s ) } F( Y_s ) 
    \big\|_{ V_{\gamma} } 
  \Big] \, ds
\leq 
  \int_0^t
  \big\| ( \eta - A )^{ \gamma }
    e^{ A( t -s ) }
  \big\|_{ L( H ) }
  \,
  \mathbb{E}\big[
    \| F( Y_s ) \|_H 
  \big] 
  \, ds
\\ & \leq 
  R \,
  c_{ \gamma }
  \int_0^t
  \left( t - s \right)^{
    - \gamma
  }
  \left( 
    1 + 
    \mathbb{E}\big[ \| Y_s \|_H \big]
  \right) 
  ds
\leq
  \frac{
    R \, c_{ \gamma } \,
    T^{ \left( 1 - \gamma \right) }
  }{
    \left( 1 - \gamma \right)
  }
  \left( 
    1 + 
    \sup_{ s \in [0,T] }
    \mathbb{E}\big[ \| Y_s \|_H \big]
  \right) 
\\ & \leq
  \frac{
    R \, c_{ \gamma } \,
    T^{ \left( 1 - \gamma \right) }
  }{
    \left( 1 - \gamma \right)
  }
  \left( 
    1 + 
    \sup_{ s \in [0,T] }
    \| Y_s \|_{
      L^p( \Omega; H_{ \alpha } )
    }
  \right) 
< \infty
\end{align*}
for all $ t \in [0,T] $ 
and all $ Y \in \mathcal{V}_{\alpha} $. 
This
shows that 
$ \int_0^t e^{ A( t - s ) } F( Y_s ) \, ds $, 
$ t \in [0,T] $, is a well defined $ V_{\gamma} $-valued (and in particular $ V_{ \alpha } $-valued) adapted stochastic process 
for every $ Y \in \mathcal{V}_{\alpha} $. 
Moreover, we have
\begin{align*}
 &\left\|
    \int_0^{ t_2 }
    e^{ A( t_2 - s ) }
    F( Y_s ) \, ds
    -
    \int_0^{ t_1 }
    e^{ A( t_1 - s ) }
    F( Y_s ) \, ds
  \right\|_{ L^p( \Omega; V_{r} ) }
\\&\leq 
  \left\|
    \int_{ t_1 }^{ t_2 }
    e^{ A( t_2 - s ) }
    F( Y_s ) \, ds
  \right\|_{ L^p( \Omega; V_{r} ) }
\\&
  +
  \left\|
    \left(
      e^{ A( t_2 - t_1 ) } - I
    \right)
    \int_{ 0 }^{ t_1 }
    e^{ A( t_1 - s ) }
    F( Y_s ) \, ds
  \right\|_{ L^p( \Omega; V_{r} ) }
\\&\leq 
  \int_{ t_1 }^{ t_2 }
  \big\|
    ( \eta - A )^{ r }
    e^{ A( t_2 - s ) }
  \big\|_{ L( H ) }
  \left\|
    F( Y_s ) 
  \right\|_{ L^p( \Omega; H ) } ds
\\&+
  \big\|
    ( \eta - A )^{ 
      (r - \gamma - \varepsilon) 
    }
    \,
    (
      e^{ A( t_2 - t_1 ) } - I
    )
  \big\|_{ L( H ) }
  \int_{ 0 }^{ t_1 }
  \|
    e^{ A( t_1 - s ) }
    F( Y_s ) 
  \|_{ 
    L^p( 
      \Omega; 
      V_{\gamma + \varepsilon} 
    ) 
  }
  \, ds
\end{align*}
and 
Lemma~\ref{lemmaRef4} 
thus shows
\begin{align*}
& 
  \left\|
    \int_0^{ t_2 }
    e^{ A( t_2 - s ) }
    F( Y_s ) \, ds
    -
    \int_0^{ t_1 }
    e^{ A( t_1 - s ) }
    F( Y_s ) \, ds
  \right\|_{ L^p( \Omega; V_{r} ) }
\\&\leq
  c_r 
  \int_{ t_1 }^{ t_2 }
  \left( t_2 - s \right)^{ - r }
  \left\|
    F( Y_s ) 
  \right\|_{ L^p( \Omega; H ) } ds
\\ & \quad +
  c_{ 
    \left( \gamma + \varepsilon - r \right) 
  }
  \,
  c_{
    \left( \gamma + \varepsilon \right)
  }
  \left( 
    t_2 - t_1 
  \right)^{ 
    \left( \gamma + \varepsilon - r \right) 
  }
  \int_{ 0 }^{ t_1 }
  \left( t_1 - s \right)^{
    - 
    \left( 
      \gamma + \varepsilon 
    \right)
  }
  \|
    F( Y_s ) 
  \|_{ 
    L^p( \Omega; H ) 
  } 
  \,
  ds
\\ & \leq
  R
  \left(
  \frac{ 
    c_r 
    \left( 
      t_2 - t_1
    \right)^{ 
      \left( 1 - r \right) 
    }
  }{
    \left( 1 - r \right)
  }
  +
  \frac{  
    c_{ 
      \left( \gamma + \varepsilon - r \right) 
    }
    \,
    c_{
      \left( \gamma + \varepsilon \right)
    }
    \left( t_2 - t_1 \right)^{
      \left( \gamma + \varepsilon - r \right)
    }
    T^{ 
      \left( 1 - \gamma - \varepsilon \right) 
    }
  }{ 
    \left( 1 - \gamma - \varepsilon \right)
  }
  \right)
\\ & \quad \cdot
  \left(
    1 + \sup_{ s \in [0,T] }
    \left\| Y_s \right\|_{ L^p( \Omega; H ) }
  \right)
\end{align*}
for all 
$ t_1, t_2 \in [0,T] $ 
with $ t_1 \leq t_2 $, 
$ \varepsilon \in [0,1-\gamma) $,
$ r \in [0,\gamma] $ 
and all 
$ Y \in \mathcal{V}_{\alpha} $. 
This finally
shows
\begin{multline}
\label{eq:welldefined3}
 \left\|
    \int_0^{ t_2 }
    e^{ A( t_2 - s ) }
    F( Y_s ) \, ds
    -
    \int_0^{ t_1 }
    e^{ A( t_1 - s ) }
    F( Y_s ) \, ds
  \right\|_{ L^p( \Omega; V_{r} ) }
\\
\leq
  R
  \left(
  \frac{  
    T^{ 
      \left( 1 - \gamma - \varepsilon \right) 
    }
  }{ 
    \left( 1 - \gamma - \varepsilon \right)
  }
  \right)
  \left(
    1 + \sup_{ s \in [0,T] }
    \left\| Y_s \right\|_{ L^p( \Omega; H ) }
  \right)
\\
  \cdot
    \left(
      c_r 
      +
      c_{ 
        \left( \gamma + \varepsilon - r \right) 
      }
      \,
      c_{
        \left( \gamma + \varepsilon \right)
      }
    \right)
    \left( t_2 - t_1 \right)^{
      \left( \gamma + \varepsilon - r \right)
    }
\end{multline}
for all $ t_1, t_2 \in [0,T] $ 
with $ t_1 \leq t_2 $, 
$ \varepsilon \in [0,1-\gamma) $,
$ r \in [0,\gamma ] $
and all $ Y \in \mathcal{V}_{\alpha} $. 
Proposition 3.6 (ii) in \cite{dz92} thus yields 
that the stochastic 
process $ \int_0^t e^{ A( t-s ) } F( Y_s ) \, ds $, 
$ t \in [0,T] $, has a modification 
in $ \mathcal{V}_{\gamma} \subset \mathcal{V}_{\alpha} $ 
for every $ Y \in \mathcal{V}_{\alpha} $. 

In the sequel
we concentrate
on the third summand
on the right hand side 
of \eqref{eqn:phi}. 
First observe that 
Kuratowski's theorem 
shows
$ V_{ \alpha } \in \mathcal{B}(H) $,
$
  HS( U_0, V_{ \alpha } )
  \in
  \mathcal{B}( HS( U_0, H ) )
$,
$ \mathcal{B}( V_{ \alpha } )
= \mathcal{B}(H) \cap V_{ \alpha } $
and
$
  \mathcal{B}( 
    HS( U_0, V_{ \alpha } ) 
  )
  =
  \mathcal{B}( 
    HS( U_0, H ) 
  )
  \cap
  HS( U_0, V_{ \alpha } ) 
$.
This implies that
the mapping 
$ \tilde{B} \colon V_{ \alpha }
\rightarrow HS( U_0, V_{ \alpha } ) $
given by
$ \tilde{B}(v) = B(v) $
for all $ v \in V_{ \alpha } $
is
$
  \mathcal{B}( V_{ \alpha } )
$/$
  \mathcal{B}( HS( U_0, V_{ \alpha } ) )
$-measurable.
Next 
Lemma \ref{lemmaRef4} gives
\begin{align*}
  &
  \int_0^t
  \mathbb{E}\Big[
    \big\|
      e^{ A( t - s ) } B( Y_s )
    \big\|^2_{ HS( U_0, V_{ \gamma } ) } 
  \Big] \, ds
\\ & \leq
  \int_0^t
  \big\|
    ( \eta - A 
    )^{ ( \gamma - \alpha ) } 
    \,
    e^{ A( t - s ) }
  \big\|^2_{ L ( H ) } \,
  \mathbb{E}\big[
    \|
    B( Y_s )
    \|^2_{ HS( U_0, V_{\alpha} ) } 
  \big] \, ds
\\ & \leq 
  2 \, c^2 \,
  |
    c_{
      \left( \gamma - \alpha \right)
    }
  |^2
  \int_0^t
  \left( t - s \right)^{ 
    ( 2 \alpha - 2 \gamma ) 
  }
  \left( 
    1 +
    \mathbb{E}\big[
      \|
        Y_s
      \|^2_{ V_{\alpha} }
    \big]
  \right) ds 
\\ & \leq 
  \left(
  \frac{ 
    2 \, c^2 \,
    |
      c_{
        \left( \gamma - \alpha \right)
      }
    |^2
    \,
    T^{ ( 1 + 2 \alpha - 2 \gamma ) } }{
    \left( 1 + 2 \alpha - 2 \gamma \right) }
  \right)
  \left( 
    1 + \sup_{ s \in [0,T] }
    \mathbb{E}\big[ \|
      Y_s
    \|^2_{ V_{\alpha} } \big]
  \right)
  < \infty
\end{align*}
for all $ t \in [0,T] $ 
and all $ Y \in \mathcal{V}_{\alpha} $.
Therefore, 
Remark~\ref{rem:analytic}
shows that 
$ 
  \int_0^t 
$
$
    e^{ A( t - s ) } B( Y_s ) 
$
$ 
  dW_s 
$, 
$ t \in [0,T] $, is a well defined $ V_{\gamma} $-valued (and in particular $ V_{ \alpha } $-valued) adapted stochastic process for every $ Y \in \mathcal{V}_{\alpha} $
(cf. the heuristic 
calculation~\eqref{eq:newX1} in the
introduction).
Moreover, the Burkholder-Davis-Gundy type inequality in Lemma~7.7 in
\cite{dz92} gives
\begin{align*}
&
  \left\|
    \int_0^{ t_2 }
    e^{ A( t_2 - s ) } B( Y_s ) \, dW_s
    -
    \int_0^{ t_1 }
    e^{ A( t_1 - s ) } B( Y_s ) \, dW_s
  \right\|_{ 
    L^p( \Omega; V_{ r } ) 
  }
\\ & \leq 
  \left\|
    \int_{ t_1 }^{ t_2 }
    e^{ A( t_2 - s ) } B( Y_s ) \, dW_s
  \right\|_{ L^p( \Omega; V_{ r } ) }
\\ & +
  \left\|
    \left( e^{ A( t_2 - t_1 ) } - I \right)
    \int_0^{ t_1 }
    e^{ A( t_1 - s ) } B( Y_s ) \, dW_s
  \right\|_{ L^p( \Omega; V_{ r } ) }
\\ & \leq 
  p 
  \left(
    \int_{ t_1 }^{ t_2 }
    \big\|
      e^{ A( t_2 - s ) } B( Y_s )
    \big\|^2_{ 
      L^p( \Omega; HS( U_0, V_{r} ) ) 
    } \, ds
  \right)^{ \! \frac{1}{2} }
\\ & +
  p \,
  \big\|
    e^{ A( t_2 - t_1 ) } - I
  \big\|_{ 
    L( H, 
    V_{ 
      ( r - \gamma - \varepsilon) 
    } ) 
  }
  \left(
    \int_0^{ t_1 } \!
    \big\|
      e^{ A( t_1 - s ) } B( Y_s )
    \big\|^2_{ 
      L^p( \Omega; 
        HS( U_0, V_{\gamma + \varepsilon} ) 
      ) 
    } \,
    ds
  \right)^{\!\frac{1}{2} }
\end{align*}
and 
Lemma~\ref{lemmaRef4} 
therefore shows
\begin{align}
\label{eq:implied_by_L1}
&
  \left\|
    \int_0^{ t_2 }
    e^{ A( t_2 - s ) } B( Y_s ) \, dW_s
    -
    \int_0^{ t_1 }
    e^{ A( t_1 - s ) } B( Y_s ) \, dW_s
  \right\|_{ L^p( \Omega; V_{ r } ) }
\nonumber
\\ & \leq 
\nonumber
  p 
  \left(
    \int_{ t_1 }^{ t_2 }
    \big\|
      ( \eta - A )^{ 
        ( r - \alpha ) 
      }
      \,
      e^{ A (t_2 - s) }
    \big\|_{ L(H) }^2
    \left\| 
      B( Y_s )
    \right\|^2_{ 
      L^p( \Omega; HS( U_0, V_{ \alpha } ) 
    ) }  
    ds
  \right)^{ \! \frac{1}{2} }
\\ & \quad +
  p \,  
  c_{ ( \gamma + \varepsilon - r ) }
  \,
  c_{ ( \gamma + \varepsilon - \alpha ) } 
  \left( t_2 - t_1 
  \right)^{ 
    ( \gamma + \varepsilon - r ) 
  }
\\ & \quad \quad \cdot
\nonumber
  \left(
    \int_0^{ t_1 } 
    \left( t_1 - s \right)^{
      ( 2 \alpha - 2 \gamma - 2 \varepsilon )
    }
    \left\|
      B( Y_s )
    \right\|^2_{ 
      L^p( \Omega; HS( U_0, V_{ \alpha } ) 
      ) 
    } ds
  \right)^{ \! \frac{1}{2} }
\end{align} 
for all $ t_1, t_2 \in [0,T] $ 
with 
$ t_1 \leq t_2 $,
$ \varepsilon \in 
[0, \frac{1}{2} + \alpha - \gamma ) $,
$ r \in [0, \gamma ] $ 
and all
$ Y \in \mathcal{V}_{ \alpha } $.
In the case $ r \in [\alpha,\gamma] $ 
we have
\begin{equation}\label{eqn:caseA}
  \big\|
    ( \eta - A )^{ ( r - \alpha ) } \,
    e^{ A s }
  \big\|_{ L( H ) }
  \leq 
  c_{ ( r - \alpha ) }
  \,
  s^{ (\alpha - r ) }
\end{equation}
for all $ s \in (0,T] $ 
(see Lemma \ref{lemmaRef4}) 
and in the case 
$ r \in [0, \alpha) $ 
we have
\begin{equation}\label{eqn:caseB}
  \big\|
    ( \eta - A )^{ ( r - \alpha ) }
    \,
    e^{ A s }
  \big\|_{ L( H ) }
  \leq 
  \big\|
    ( \eta - A )^{ ( r - \alpha ) }
  \big\|_{ L( H ) }
  \,
  c_0
  \leq 
  c_0 \, R
\end{equation}
for all $ s \in (0,T] $.
Combining \eqref{eqn:caseA} 
and \eqref{eqn:caseB} 
shows
\begin{equation}
\begin{split}
\lefteqn{
  \left(
  \int_0^t
  \big\|
    ( \eta - A )^{ ( r - \alpha ) }
    \,
    e^{ A s }
  \big\|_{ L( H ) }^2
  \, 
  ds
  \right)^{ \! \frac{ 1 }{ 2 } }
}
\\ & \leq
  \left(
  \int_0^t
  \left(
    | 
      c_{ \max( r - \alpha, 0 ) } 
    |^2 \,
    s^{ 
      ( 2 \alpha - 2 r )
    }
    +
    | 
      c_{ \max( r - \alpha, 0 ) } 
    |^2 \,
    R^2
  \right)
  ds
  \right)^{ \! \frac{ 1 }{ 2 } }
\\ & \leq
  c_{ \max( r - \alpha, 0 ) } 
  \,
  R
  \left(
    \frac{
      t^{ 
        ( 1/2 + \alpha - r )
      }
    }{
      \sqrt{ 
        1 + 2 \alpha - 2 r
      }
    }
    +
    t^{ 1/2 }
  \right)
\end{split}
\end{equation}
and hence
\begin{equation}
\label{eq:xxx}
\begin{split}
\lefteqn{
  \left(
  \int_0^t
  \big\|
    ( \eta - A )^{ ( r - \alpha ) }
    \,
    e^{ A s }
  \big\|_{ L( H ) }^2
  \, 
  ds
  \right)^{ \! \frac{ 1 }{ 2 } }
}
\\ & \leq
  \frac{
  c_{ \max( r - \alpha, 0 ) } 
  \,
  R
  \left(
    t^{ 
      ( 1/2 + \alpha - r )
    }
    +
    t^{ 1/2 }
  \right)
  }{
    \sqrt{ 
      1 + 2 \alpha - 2 \gamma 
      - 2 \varepsilon
    }
  }
\\ & \leq
  \frac{
  c_{ \max( r - \alpha, 0 ) } 
  \,
  R
  \left(
    T^{ 
      ( 
        1/2 + \alpha 
        - \gamma - \varepsilon
      ) 
    }
    \,
    t^{ 
      ( 
        \gamma + \varepsilon - r
      )
    }
    +
    t^{ 1/2 }
  \right)
  }{
    \sqrt{ 
      1 + 2 \alpha - 2 \gamma 
      - 2 \varepsilon
    }
  }
\end{split}
\end{equation}
for all 
$ t \in [0,T] $,
$ 
  \varepsilon \in 
  [0, \frac{1}{2} + \alpha - \gamma ) 
$
and all
$ 
  r \in [0, \gamma ] 
$.
Using \eqref{eq:xxx} in
\eqref{eq:implied_by_L1} 
then gives
\begin{align}
\label{eq:deduced}
&
  \left\|
    \int_0^{ t_2 }
    e^{ A( t_2 - s ) } B( Y_s ) \, dW_s
    -
    \int_0^{ t_1 }
    e^{ A( t_1 - s ) } B( Y_s ) \, dW_s
  \right\|_{ L^p( \Omega; V_{ r } ) }
\nonumber
\\ & \leq 
\nonumber
  \Bigg(
  \left(
  \frac{
    2 
    \, p
    \,
    c_{ \max( r - \alpha, 0 ) } 
    \,
    R 
    \,
    \max(T,1)
    \left( t_2 - t_1 \right)^{ 
      \min( 
        \gamma + \varepsilon - r, \frac{ 1 }{ 2 }
      )
    }
  }{
    \sqrt{ 
      1 + 2 \alpha - 2 \gamma 
      - 2 \varepsilon
    }
  }
  \right) 
\\ & \quad +
  \left(
    \frac{
      p \,  
      c_{ ( \gamma + \varepsilon - r ) }
      \,
      c_{ ( \gamma + \varepsilon - \alpha ) } 
      \max(T,1)
      \left( t_2 - t_1 
      \right)^{ 
        \min( \gamma + \varepsilon - r,
        \frac{ 1 }{ 2 } ) 
      }
    }{
      \sqrt{ 
        1 + 2 \alpha - 2 \gamma 
        - 2 \varepsilon
      }
    }
  \right)
  \Bigg)
\\ & \quad \quad \cdot
\nonumber
  \left(
    \sup_{ t \in [0,T] }
    \left\| 
      B( Y_t )
    \right\|_{ 
      L^p( \Omega; HS( U_0, V_{ \alpha } ) 
    ) }  
  \right)
\end{align} 
for all $ t_1, t_2 \in [0,T] $ 
with 
$ t_1 \leq t_2 $,
$ 
  \varepsilon \in 
  [0, \frac{1}{2} + \alpha - \gamma ) 
$,
$ 
  r \in [0, \gamma ] 
$ 
and all
$ Y \in \mathcal{V}_{ \alpha } $.
Proposition 3.6 (ii) 
in \cite{dz92} 
thus yields that 
$ 
  \int_0^t 
  e^{ A( t - s ) } B( Y_s ) 
  \, dW_s 
$, 
$ t \in [0,T] $, has a modification 
in 
$
  \mathcal{V}_{\gamma} \subset 
  \mathcal{V}_{\alpha} 
$
for every 
$ 
  Y \in 
  \mathcal{V}_{ \alpha } 
$
%%%%%%%%%%%%%%%%
and this finally shows the well 
definedness of 
$ 
  \Phi \colon 
  \mathcal{V}_{\alpha} \rightarrow 
  \mathcal{V}_{\alpha} 
$ 
in \eqref{eqn:phi}
(see
\eqref{eq:welldefined1}, \eqref{eq:welldefined3} and \eqref{eq:deduced}).
%%%%%%%%%%%%%%
%%%%%%%%%%%%%%
%%%%%%%%%%%%%%%
%%%%%%%%%%%%%%

In the next step we show that 
$ 
  \Phi \colon \mathcal{V}_{\alpha} 
  \rightarrow \mathcal{V}_{\alpha} 
$ 
is a contraction with respect 
to 
$ 
  \left\| \cdot 
  \right\|_{ \mathcal{V}_{\alpha}, u } 
$ 
for an appropriate $ u \in \mathbb{R} $. 
The Banach fixed point theorem will 
then yield the existence of a unique 
fixed point for 
$ 
  \Phi \colon \mathcal{V}_{\alpha} 
  \rightarrow \mathcal{V}_{\alpha} 
$. 
More formally, 
Lemma 7.7 in \cite{dz92} gives
\begin{align*}
 &\left\|
    \left( \Phi Y \right)_t 
    - 
    \left( \Phi Z \right)_t
  \right\|_{ L^p( \Omega; V_{\alpha} ) }
\\&\leq 
  \left\|
    \int_0^t
    e^{ A( t - s ) }
    \left( F( Y_s ) - F( Z_s ) \right) ds
  \right\|_{ L^p( \Omega; V_{\alpha} ) }
\\&\quad+
  \left\|
    \int_0^t
    e^{ A( t - s ) }
    \left( B( Y_s ) - B( Z_s ) \right) dW_s
  \right\|_{ L^p( \Omega; V_{\alpha} ) }
\\&\leq
  \int_0^t 
  \big\|
    ( \eta - A )^{ \alpha } \,
    e^{ A( t - s ) }
  \big\|_{ L( H ) }
  \left\|
    F( Y_s ) - F( Z_s )
  \right\|_{ L^p( \Omega; H ) } ds
\\&\quad+
  p \left(
    \int_0^t
    \big\|
      ( \eta - A )^{ \alpha } \,
      e^{ A( t - s ) }
    \big\|_{ L( H ) }^2
    \left\|
      B( Y_s ) - B( Z_s ) 
    \right\|^2_{ 
      L^p( \Omega; HS( U_0, H ) ) 
    } 
    ds
  \right)^{ 
    \! \frac{1}{2} 
  }
\end{align*}
and the definition of $R$ 
and Lemma~\ref{lemmaRef4}
yield
\begin{align*}
 &\left\|
    \left( \Phi Y \right)_t 
    - 
    \left( \Phi Z \right)_t
  \right\|_{ L^p( \Omega; V_{\alpha} ) }
\\&\leq
  R \, 
  c_{
    \alpha
  }
  \int_0^t 
  \left( t - s \right)^{
    - \alpha
  }
  \left\|
    Y_s - Z_s
  \right\|_{ L^p( \Omega; H ) } ds
\\ & \quad +
  p \, R \, c_{ \alpha }
  \left(
    \int_0^t
    \left( t - s \right)^{ - 2 \alpha }
    \left\|
      Y_s - Z_s
    \right\|^2_{ L^p( \Omega; H ) } 
    ds
  \right)^{ \frac{1}{2} }
\\ & \leq
  R \,
  c_{ \alpha }
  \left(
    \int_0^t
    \left( t - s \right)^{ -\alpha } 
    e^{ -u s } \, ds
  \right)
  \left\|
    Y - Z
  \right\|_{ \mathcal{V}_0, u }
\\ & \quad +
  p \, R \, c_{ \alpha }
  \left(
    \int_0^t
    \left( t - s \right)^{ -2\alpha }
    e^{ - 2 u s } \, ds
  \right)^{ 
    \! \frac{1}{2} 
  }
  \left\|
    Y - Z
  \right\|_{ \mathcal{V}_0, u }
\end{align*}
for all $ t \in [0,T] $,
$ 
  Y, Z \in \mathcal{V}_{\alpha} 
$ 
and 
all $ u \in \mathbb{R} $. 
The Cauchy-Schwartz inequality
therefore implies
\begin{equation}
\begin{split}
\lefteqn{
  \left\|
    \Phi( Y ) 
    - 
    \Phi( Z ) 
  \right\|_{ \mathcal{V}_{\alpha}, u }
}
\\ & \leq
  R \, 
  c_{ 
    \alpha
  } 
  \left( \sqrt{T} + p \right)
  \left(
    \int_0^T
    s^{ -2 \alpha } \,
    e^{ 2 u s } \, ds
  \right)^{ \! \frac{1}{2} }
  \left\|
    Y - Z
  \right\|_{ \mathcal{V}_0, u }
\\ & \leq
  R \, c_{ \alpha } 
  \left( \sqrt{T} + p \right)
  \left(
    \int_0^T
    \frac{ e^{ 2 u s } }{
      s^{ 2 \alpha }
    }
    \, ds
  \right)^{ \! \frac{1}{2} }
  \left\|
    Y - Z
  \right\|_{ \mathcal{V}_{ \alpha }, u }
\end{split}
\end{equation}
for all 
$
  Y, Z \in \mathcal{V}_{\alpha} 
$ 
and 
all
$ 
  u \in \mathbb{R} 
$. 
This shows that 
$ 
  \Phi \colon \mathcal{V}_{\alpha} 
  \rightarrow \mathcal{V}_{\alpha} 
$ 
is a contraction with 
respect to 
$ \left\| \cdot \right\|_{ \mathcal{V}_{ \alpha }, u } $ 
for a sufficiently small $ u \in (-\infty, 0 ) $. Hence, there is a 
up to modifications
unique predictable stochastic
process 
$ 
  Y \colon [0,T] \times \Omega
  \rightarrow V_{ \alpha }
  \in \mathcal{V}_{\alpha} 
$
with $ \Phi( Y ) = Y $, i.e. 
\begin{equation}\label{eq:SPDEsol}
    Y_t 
    =
    e^{ At } \xi
    + 
    \int_0^t
    e^{ A ( t - s ) } F( Y_s ) \, ds
    +
    \int_0^t
    e^{ A ( t - s ) } B( Y_s ) \, dW_s
\end{equation}
$ \mathbb{P} $-a.s.\ for 
all $ t \in [0,T] $. 
Moreover, 
% the fact
% $ Y \in \mathcal{V}_{ \alpha } $, 
\eqref{eq:welldefined1}, \eqref{eq:welldefined3},
\eqref{eq:deduced} 
and Proposition~3.6
(ii) in \cite{dz92}
then
show that
there exists a
predictable modification
$ 
  X \colon [0,T]
  \times \Omega
  \rightarrow V_{ \gamma }
$
of
$ 
  Y \colon [0,T]
  \times \Omega
  \rightarrow V_{ \alpha }
$.

Additionally, 
note that 
the inequality
$
  \left\|
    B(v)
  \right\|_{
    HS( U_0, H_{ \alpha } )
  }^p
  \leq
  2^p \,
  c^p 
  \left(
    1 +
    \left\| v \right\|_{ H_{ \alpha } }^p 
  \right)
$
for all
$ v \in H_{ \alpha } $
(see Assumption~\ref{diffusion})
implies
\begin{align}\label{eq:Balpha}
  \sup_{ t \in [0,T] }
  \mathbb{E}\Big[
    \big\|
    B( X_t ) 
    \big\|^p_{ HS( U_0, V_{\alpha} ) }
  \Big]
\leq 
  2^p \, c^p
  \left(
    1 + \sup_{ t \in [0,T] }
    \mathbb{E}\big[ 
      \| X_t \|^p_{ V_{\alpha } }
    \big]
  \right)
  < \infty .
\end{align}
It remains to establish
the temporal continuity 
properties asserted
in Theorem~\ref{thm:existence}.
To this end note that
Lemma~\ref{lemmaRef4}
gives
\begin{align}\label{eq:last}
&
  \left\|
    e^{ A t_2 } \xi -
    e^{ A t_1 } \xi 
  \right\|_{ L^p( \Omega; V_r ) }
\nonumber
\\&
=
  \left\|
    e^{ A t_1 }
    \left( \eta - A 
    \right)^{ \left( r - \gamma 
      \right) 
    }
    \left(
      e^{ A ( t_2 - t_1 ) }
      - I
    \right)
    \left( \eta - A 
    \right)^{ \gamma }
    \xi
  \right\|_{ L^p( \Omega; H ) }
\\&\leq
\nonumber
  \left\| e^{ A t_1 }
  \right\|_{ L(H) }
  \big\|
    ( \eta - A 
    )^{ 
      ( r - \gamma
      ) 
    }
    \,
    (
      e^{ A ( t_2 - t_1 ) }
      - I
    ) 
  \big\|_{ L(H) }
  \left\| \xi \right\|_{ 
    L^p( \Omega; V_{ \gamma } ) 
  }
\\ & \leq
\nonumber
  c_0 \,
  c_{ 
    \left( 
      \gamma - r 
    \right)
  }
  \left\| \xi \right\|_{ 
    L^p( \Omega; V_{ \gamma } ) 
  }
  \left( t_2 - t_1 \right)^{
    \left( \gamma - r \right)
  }
\end{align}
for all $ t_1, t_2 \in [0,T] $
with $ t_1 \leq t_2 $
and all 
$ r \in [0,\gamma] $.
Combining \eqref{eq:welldefined3}, 
\eqref{eq:deduced}
and \eqref{eq:last} 
then yields
\eqref{eq:holder}.
Finally, \eqref{eq:welldefined1},
\eqref{eq:welldefined3}
and \eqref{eq:deduced}
show that $ X_t $, $ t \in [0,T] $,
is continuous with respect to
$ 
  \big( \mathbb{E}\big[ 
      \| \! \cdot \!
      \|_{ V_{ \gamma } }^p
    \big]
  \big)^{ \frac{1}{p} } 
$.
This completes the proof 
of Theorem~\ref{thm:existence}. 
\end{proof}
\subsubsection*{Acknowledgement}
This work has been 
partially supported by the
research project
``Numerical solutions of
stochastic differential equations
with non-globally
Lipschitz continuous 
coefficients'',
by the Collaborative 
Research Centre $701$
``Spectral Structures 
and Topological Methods 
in Mathematics'',
by the
International Graduate 
School ``Stochastics and Real 
World Models''
(all funded by the German
Research Foundation) and
by the 
BiBoS Research Center.
The support of Issac Newton
Institute for Mathematical
Sciences in Cambridge is
also gratefully acknowledged
where part of this was done
during the special semester
on ``Stochastic Partial Differential
Equations''. Additionally,
we would like to express
our sincere gratitude to Sonja
Cox, Jan van Neerven 
and an anonymous referee
for their
very helpful advice.
\bibliographystyle{acm}
\bibliography{bibfile}
\end{document}